\renewcommand{\phi}{\varphi}
\newtheorem{thm}{Theorem}[section]
\newtheorem{lemma}[thm]{Lemma}
\newtheorem{prop}[thm]{Proposition}
\newtheorem{cor}[thm]{Corollary}
\newtheorem{question}[thm]{Question}
\newtheorem{fact}[thm]{Fact}
\newtheorem{facts}[thm]{Facts}
\theoremstyle{definition}
\newtheorem{df}[thm]{Definition}
\newtheorem*{thm*}{Theorem}
\newenvironment{notation}
{\begin{trivlist} \item[] {\bf Notation}. } {\end{trivlist}}
\def\dotminussym#1#2{%
  \setbox0=\hbox{$\m@th#1-$}%
  \kern.5\wd0%
  \hbox to 0pt{\hss\hbox{$\m@th#1-$}\hss}%
  \raise.6\ht0\hbox to 0pt{\hss$\m@th#1.$\hss}%
  \kern.5\wd0}
\newcommand{\dotminus}{\mathbin{\mathpalette\dotminussym{}}}
\DeclareMathOperator{\tp}{tp}
\newcommand{\N}{\mathbb{N}}
\newcommand{\R}{\mathbb{R}}
\def \l{\mathcal{L}}
\def \Th{\operatorname{Th}}
\def \m{\mathcal{M}}
\def \u{\mathcal{U}}
\def \n{\mathcal{N}}
\def \V{\tilde{V}}
\def \G{\frak{G}}
\def \depth{\operatorname{depth}}
\def \C{\mathcal{C}}
\def \tC{\tilde{\mathcal{C}}}
\def \tr{\operatorname{tr}}
\def \tp{\operatorname{tp}}
\def \v{\mathcal{V}}
\def \balpha{\boldsymbol\alpha}
\def \bbeta{\boldsymbol\beta}
\def \F{\mathcal{F}}
\def \FF{\mathcal{F}_{\operatorname{fo}}}
\def \FA{\mathcal{F}_\forall}
\def \r{\mathcal{R}}
\title{On the theories of McDuff's II$_1$ factors}
\author[I.~Goldbring]{Isaac Goldbring}
\address{Department of Mathematics, Statistics, and Computer Science, University of Illinois at Chicago, Science and Engineering Offices M/C 249, 851 S. Morgan St.\\
Chicago, IL, 60607}
\email{\href{mailto:isaac@math.uic.edu}{isaac@math.uic.edu}}
\urladdr{\url{homepages.math.uic.edu/~isaac/}}
\author[B.~Hart]{Bradd Hart}
\address{Department of Mathematics and Statistics, McMaster University, 1280 Main St., Hamilton ON, Canada L8S 4K1}
\email{\href{mailto:hartb@mcmaser.ca}{hartb@mcmaster.ca}}
\urladdr{\url{http://ms.mcmaster.ca/~bradd/}}
\thanks{I. Goldbring was partially supported by NSF CAREER grant DMS-1349399.}
\thanks{We thank Adrian Ioana and Thomas Sinclair for useful conversations regarding this project.}
\begin{document}


\begin{abstract}
Recently, Boutonnet, Chifan, and Ioana proved that McDuff's family of continuum many pairwise nonisomorphic separable II$_1$ factors are in fact pairwise non-elementarily equivalent by proving that any ultrapowers of two distinct members of the family are nonsiomorphic.  We use Ehrenfeucht-Fraisse games to provide an upper bound on the quantifier-depth of sentences which distinguish these theories.
\end{abstract}
\maketitle

\section{Introduction}

Constructing non-isomorphic separable II$_1$ factors has an interesting history.  Murray and von Neumann \cite{MvN} gave the first example of two non-isomorphic separable II$_1$ factors by proving that the hyperfinite II$_1$ factor $\mathcal{R}$ was not isomorphic to $L(\mathbb{F}_2)$, the group von Neumannn algebra associated to the free group on two generators.  The way they proved this was by considering an isomorphism invariant, namely property Gamma, and proving that $\mathcal{R}$ has property Gamma whilst $L(\mathbb{F}_2)$ does not.  Dixmier and Lance \cite{DL} produced a new isomorphism class by constructing a separable II$_1$ factor that does have property Gamma but does not have another property, nowadays called being McDuff, that $\mathcal{R}$ does have.  Work of Zeller-Meier \cite{ZM} and Sakai \cite{S} led to several more isomorphism classes.  The lingering question remained:  are there infinitely many isomorphism classes of separable II$_1$ factors?  In \cite{MD1}, McDuff constructed a countably infinite set of isomorphism classes of separable II$_1$ factors; in the sequel \cite{MD2}, she extends her technique to construct a family $(\m_{\balpha})_{\balpha \in 2^{\omega}}$ of pairwise non-isomorphic separable II$_1$ factors.  Throughout this paper, we will refer to this family as the family of \emph{McDuff examples}.  We will describe in detail the construction of the McDuff examples later in this paper.

The model-theoretic study of tracial von Neumann algebras began in earnest in \cite{FHS}, where it was shown that both property Gamma and being McDuff are axiomatizable properties (in the appropriate continuous first-order language for studying tracial von Neumann algebras).  It follows that $\mathcal{R}$, $L(\mathbb{F}_2)$, and the Dixmier-Lance example are pairwise non-elementarily equivalent.  However, it proved difficult to find new elementary equivalence classes of II$_1$ factors, although it was generally agreed upon by researchers in the model theory of operator algebras that there should be continuum many pairwise non-elementarily equivalent II$_1$ factors.  The current authors recognized that one of the properties considered by Zeller-Meier in \cite{ZM} was axiomatizable, thus providing a fourth elementary equivalence class;  we include a proof of this observation in the last section.

In the recent paper \cite{BCI}, Boutonnet, Chifan, and Ioana  prove that the McDuff examples are pairwise non-elementarily equivalent.  They do not, however, exhibit sentences that distinguish these examples.  Indeed, their main result is the following:  if $\balpha,\bbeta \in 2^\omega$ are distinct, then for any nonprincipal ultrafilters $\u$, $\v$ on arbitrary index sets, one has that $\m_{\balpha}^\u\not\cong \m_{\bbeta}^\v$.  It is now routine to see that $\m_{\balpha}$ and $\m_{\bbeta}$ are not elementarily equivalent.  Indeed, since the question of whether or not $\m_{\balpha}$ and $\m_{\bbeta}$ are elementarily equivalent is absolute, one can safely assume CH, whence $\m_{\balpha}$ elementarily equivalent to $\m_{\bbeta}$ would imply that, for any nonprincipal ultrafilter $\u$ on $\N$, one has that $\m_{\balpha}^\u$ and $\m_{\bbeta}^\u$ are saturated models of the same theory and a familiar back-and-forth argument shows that they are isomorphic.\footnote{For those uncomfortable with the use of CH here, one can alternatively quote the Keisler-Shelah theorem as done in \cite{BCI}.}

To a model-theorist, it is interesting to know what sentences separate these examples.  Indeed, to show that $\m_{\balpha}$ and $\m_{\bbeta}$ are not elementarily equivalent, it would be interesting to write down an explicit set of sentences $T$ such that, for some $\sigma\in T$, we have $\sigma^{\m_{\balpha}}\not=\sigma^{\m_{\bbeta}}$.  At the end of this paper, we show how to do this when $\balpha(0)\not=\bbeta(0)$; for the general case, we do not know how to do this.

The main result of this paper is instead quantitative in nature.  For II$_1$ factors $\m$ and $\n$ and $k\geq 1$, we say that $\m\equiv_k \n$ if $\sigma^\m=\sigma^\n$ for any sentence $\sigma$ of ``complexity'' at most $k$.  (The precise notion of complexity will be defined in the next section.)  Here is our main result:

\begin{thm*}
Suppose that $\balpha,\bbeta\in 2^{\omega}$ are distinct and $k\in \omega$ is least such that $\balpha(k)\not=\bbeta(k)$.  Then $M_{\balpha}\not\equiv_{5k+3}M_{\bbeta}$. 
\end{thm*}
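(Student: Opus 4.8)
The statement is a quantitative Ehrenfeucht-Fraïssé bound: distinct $\balpha, \bbeta$ first differing at coordinate $k$ must be distinguishable by a sentence of complexity $5k+3$. So I need to:
1. Understand the construction of McDuff examples $\m_\balpha$.
2. Find what property at level $k$ distinguishes them.
3. Use EF games to bound the quantifier depth.

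The McDuff construction is iterated: you build a tower/sequence of algebras, and the bit $\balpha(k)$ controls some choice at stage $k$ — typically whether at stage $k$ you take a tensor product with $\mathcal{R}$ (making things more McDuff-like / central sequence trivial) or take a free product / crossed product type construction (making a central sequence non-trivial). The key invariant should be something like an iterated relative commutant / central sequence algebra: $\m_\balpha' \cap \m_\balpha^\u$ is again of the form $\m_{\sigma\balpha}^\u$ (shift), and whether it's McDuff or has property Gamma or is abelian depends on $\balpha(0)$. So BCI's argument is essentially: the bit $\balpha(k)$ is recovered by iterating "take relative commutant in the ultrapower" $k$ times and then checking a property (McDuff vs. Gamma vs. whatever) that is axiomatizable.

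**Key steps.**

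First I'd recall/reprove that taking the relative commutant inside an ultrapower, restricted to a suitable tracial ball, is a "definable" operation — more precisely, that there is a low-complexity formula $\phi(x)$ (something like $\sup_y \|[x,y]\|_2$, or a degree-2 formula defining the $\varepsilon$-approximate commutant) such that quantifying over its zero-set in $\m^\u$ is the same as quantifying over $\m' \cap \m^\u$. Then the property distinguishing $\m_{\sigma^k\balpha}$ from $\m_{\sigma^k\bbeta}$ at the bottom — say being McDuff vs. not, or having property Gamma vs. not — is witnessed by a sentence of some fixed small complexity $c$ (from \cite{FHS}; being McDuff is $\inf\sup\inf$-type, Gamma similar). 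I'd pin down $c$ exactly; I expect $c = 3$ (hence the "+3").

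Second, the EF-game / relativization step: I want to show that if $\m_\balpha \equiv_{5k+3} \m_\bbeta$ then $\m_{\sigma\balpha} \equiv_{5(k-1)+3} \m_{\sigma\bbeta}$, and induct. In the continuous EF-game formulation, Spoiler/Duplicator play elements of the two structures; the claim is that a winning strategy in the $(5k+3)$-round game on $\m_\balpha, \m_\bbeta$ yields a winning strategy in the $(5(k-1)+3)$-round game on the relative-commutant-ultrapowers. Passing to the relative commutant "costs" a fixed number of quantifiers per level — namely $5$ — because to say "$x$ lies in $\m' \cap \m^\u$ and has property P" you prefix the property-formula with a couple of quantifiers encoding the commutant condition and the ultrapower (or, staying inside $\m$, you relativize using the approximate-commutant formula, absorbing the ultrafilter via countable saturation / the fact that the game is played in models where the relative commutant is already realized). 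The bookkeeping that each descent of one level in $\balpha$ increases required complexity by exactly $5$ is the crux: $5 = $ (cost of defining approximate relative commutant, which is a $\sup$ over one variable = $1$ or $2$) $+$ (handling the $\varepsilon$/$\delta$ and trace-normalization bookkeeping) $+ \dots$. I would compute this constant honestly by writing the relativized sentence explicitly and counting alternating blocks.

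**Main obstacle.**

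The hard part will be the relativization-cost accounting in the EF game: showing that "descend one level in $\balpha$" is implemented by adding exactly $5$ to the complexity, uniformly, and that the ultrapower in BCI's relative-commutant description can be eliminated (or absorbed) so that everything takes place between $\m_\balpha$ and $\m_\bbeta$ themselves rather than their ultrapowers — otherwise the EF game is being played on non-separable structures and one must check the strategy transfers. Concretely, the subtlety is that $\m_\balpha' \cap \m_\balpha^\u$ depends on $\u$, while the game must be ultrafilter-free; the fix is to use that the relevant property (McDuff, Gamma, etc.) of the relative commutant is itself captured by an ultrafilter-free sentence about $\m_\balpha$ of bounded complexity (this is exactly the content of the \cite{FHS}-style axiomatizations, suitably iterated), and then the induction is purely syntactic. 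I would also need to double-check the base case $k=0$: that $\balpha(0) \neq \bbeta(0)$ is detected by a complexity-$3$ sentence, which should follow directly from the axiomatizability results cited in the introduction together with an inspection of what $\balpha(0)$ controls in McDuff's construction.
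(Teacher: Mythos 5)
Your overall architecture --- induct on $k$, distinguish the two algebras by a ``depth-$k$'' property whose base case costs $3$ and whose each level of descent costs $5$, and transfer everything through a continuous EF game --- matches the paper. But the proposal defers precisely the steps that carry the mathematical content, and the one substantive guess it does make about those steps is off target, so I would count this as having a genuine gap rather than being a correct proof by the same route.

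The gap is the identification of the invariant. You guess that $\balpha(k)$ is recovered by iterating ``take the central sequence algebra $\m'\cap\m^{\u}$'' $k$ times and then testing McDuffness or property Gamma. What Boutonnet--Chifan--Ioana actually use, and what the paper's proof is built on, is their property $\V$: there is a separable $A\subseteq\m$ such that every separable $B\subseteq A'\cap\m$ can be unitarily conjugated into $C'\cap\m$ for every separable $C$. ``$\V$ at depth $k$'' is then defined by a four-block alternation $\forall\vec a\,\exists\vec b\,\forall\vec c\,\exists\vec d$ over tuples ordered by reverse inclusion of relative commutants, asserting that some algebra sandwiched between $C(\vec c)'\cap C(\vec b)$ and $C(\vec d)'\cap C(\vec a)$ has $\V$ at depth $k-1$; and BCI's theorem is that $\m_{\balpha}^{\u}$ has $\V$ at depth $k$ iff $\alpha_k=1$. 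The relative commutants that realize the ``shift'' $\balpha\mapsto\balpha^{\#}$ are not central sequence algebras but relative commutants of \emph{special} subalgebras $\prod_{\u}P_{\balpha,n_s}^{\otimes t_s}$, each of which is the commutant of a single pair of (good) unitaries. Without this specific quantifier shape you cannot carry out the accounting you correctly flag as the crux: the $5$ is $4+1$, where $4$ is the number of game rounds needed to instantiate the blocks $\vec a<\vec b<\vec c<\vec d$ (player I plays $\vec b_0$, II answers $\vec a_0$, I plays generators of a special subalgebra inside $C(\vec a_0)$, etc.), and the extra $1$ is a reserve round used to force player II's responses to respect the relative-commutant preorder (if II's reply left the relevant commutant, I could expose this in one more move). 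Likewise the $3$ at the base is the number of rounds needed to transfer property $\V$ itself: one round for the witnessing algebra $A$, one for the pair $(B,C)$, one for the conjugating unitary. Your proposed base invariant (McDuffness of an iterated central sequence algebra) is not what distinguishes $\balpha(0)$ in McDuff's construction, and the syntactic count would come out differently. Finally, your worry about ``eliminating the ultrapower'' points in the wrong direction: the proof \emph{wants} the ultrapowers, because $\V$ is a property of nonseparable algebras and because $\aleph_1$-saturation is exactly what makes $\equiv_{k}$ equivalent to the EF-game relation $\equiv_{k}^{EF}$; one passes from $\m_{\balpha}\equiv_{5k+3}\m_{\bbeta}$ to $\m_{\balpha}^{\u}\equiv_{5k+3}^{EF}\m_{\bbeta}^{\u}$ at the very end and contradicts BCI's theorem there.
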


In the next section, we describe the needed facts from logic as well as the parts of the paper \cite{BCI} that we will use in our argument.  In Section 3, we prove the main result; the proof uses Ehrenfeucht-Fr\"aisse games.  In Section 4, we take care of some miscellaneous facts.  First, we write down an explicit list of sentences that distinguish $M_{\balpha}$ from $M_{\bbeta}$ when $\balpha(0)\not=\bbeta(0)$.  Next we discuss how the model-theoretic behavior of ``good unitaries'' underlies much of the argument in \cite{BCI}.  We then go on to show how Zeller-Meier's notion of \emph{inner asymptotic commutativity} is axiomatizable and discuss another of Zeller-Meier's notions (which he does not name but we call ``super McDuff''), giving some evidence as to why it might be axiomatizable.  
Finally, we bring up the notion of the \emph{first-order fundamental group} of a II$_1$ factor and show how finding a II$_1$ factor with proper first-order fundamental group would give a different proof of the existence of continuum many theories of II$_1$ factors.

We list here some conventions used throughout the paper.  First, we follow set theoretic notation and view $k\in \omega$ as the set of natural numbers less than $k$:  $k=\{0,1\ldots,k-1\}$.  In particular, $2^k$ denotes the set of functions $\{0,1,\ldots,k-1\}\to \{0,1\}$.  If $\balpha\in 2^k$, then we set $\alpha_i:=\balpha(i)$ for $i=0,1,\ldots,k-1$ and we let $\balpha^\#\in 2^{k-1}$ be such that $\balpha$ is the concatenation of $(\alpha_0)$ and $\balpha^\#$.  If $\balpha\in 2^\omega$, then $\balpha|k$ denotes the restriction of $\balpha$ to $\{0,1,\ldots,k-1\}$.  

Whenever we write a tuple $\vec x$, it will be understood that the length of the tuple is countable (that is, finite or countably infinite).

We use $\subset$ (as opposed to $\subseteq$) to denote proper inclusion of sets.

If $\m$ is a von Neumann algebra and $A$ is a subalgebra of $\m$, then $$A'\cap \m:=\{x\in \m \ | \ [x,a]=0 \text{ for all } a\in A\}.$$  In particular, the center of $\m$ is $Z(M):=\m'\cap \m$.  For a tuple $\vec a$ from $\m$, we write $C(\vec a)$ to denote $A'\cap \m$, where $A$ is the subalgebra of $\m$ generated by the coordinates of $\vec a$.  (Technically, this notation should also mention $\m$, but the ambient algebra will always be clear from context, whence we omit any mention of it in the notation.)    
\section{Preliminaries}

\subsection{Logic}

\begin{df}
We define the \emph{quantifier-depth} $\depth(\varphi)$ of a formula $\varphi$ by induction on the complexity of $\varphi$.
\begin{itemize}
\item If $\varphi$ is atomic, then $\depth(\varphi)=0$.
\item If $\varphi_1,\ldots,\varphi_n$ are formulae, $f:\R^n\to \R$ is a continuous function and $\varphi=f(\varphi_1,\ldots,\varphi_n)$, then $\depth(\varphi)=\max_{1\leq i\leq n}\depth(\varphi_i)$.
\item If $\varphi=\sup_{\vec x} \psi$ or $\varphi=\inf_{\vec x} \psi$, then $\depth(\varphi)=\depth(\psi)+1$.
\end{itemize}
\end{df}

The main tool in this paper is the following variant of the usual Ehrenfeucht-Fraisse game.
\begin{df}
Let $\m$ and $\n$ be $\l$-structures and let $k\in \N$.  $\G(\m,\n,k)$ denotes the following game played by two players.  First, player I plays either a tuple $\vec{x_1}\in \m$ or a tuple $\vec{y_1}\in \n$.  Player II then responds with a tuple $\vec{y_1}\in \n$ or $\vec{x_1}\in \m$.  The play continues in this way for $k$ rounds.  We say that \emph{Player II wins $\G(\m,\n,k)$} if there is an isomorphism between the substructures generated by $\{\vec{x_1},\ldots,\vec{x_k}\}$ and $\{\vec{y_1},\ldots,\vec{y_k}\}$ that maps $\vec{x_i}$ to $\vec{y_i}$.  
\end{df}

\begin{df}
Suppose that $\m$ and $\n$ are $\l$-structures.
\begin{enumerate}
\item We write $\m\equiv_k \n$ if $\sigma^\m=\sigma^\n$ whenever $\operatorname{depth}(\sigma)\leq k$.
\item We write $\m\equiv_k^{EF} \n$ if II has a winning strategy for $\G(\m,\n,k)$. 
\end{enumerate}
\end{df}

It is a routine induction to show that $\m\equiv_k^{EF}\n$ implies $\m\equiv_k \n$.  

\begin{lemma}
Suppose that $\m$ and $\n$ are $\aleph_1$-saturated.  Then $\m\equiv_k \n$ if and only if $\m\equiv_k^{EF} \n$.
\end{lemma}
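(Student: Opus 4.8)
The plan is to prove both directions, with the forward direction being routine and the backward direction requiring the saturation hypothesis. The statement ``$\m \equiv_k^{EF} \n \Rightarrow \m \equiv_k \n$'' was already noted as a routine induction, so the content here is the converse: assuming $\m \equiv_k \n$ and that both structures are $\aleph_1$-saturated, produce a winning strategy for Player II in $\G(\m,\n,k)$. The natural approach is to define, for each $j \le k$, an appropriate notion of ``$(\vec a_1,\ldots,\vec a_j)$ and $(\vec b_1,\ldots,\vec b_j)$ agree up to depth $k-j$'', namely that for every formula $\varphi(\vec x_1,\ldots,\vec x_j)$ with $\depth(\varphi) \le k-j$ one has $\varphi^\m(\vec a_1,\ldots,\vec a_j) = \varphi^\n(\vec b_1,\ldots,\vec b_j)$, and to show Player II can maintain this condition through all $k$ rounds. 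The hypothesis $\m \equiv_k \n$ is exactly the $j=0$ case (agreement on sentences of depth $\le k$), and if the condition survives to $j = k$, then in particular $\vec a_1,\ldots,\vec a_k$ and $\vec b_1,\ldots,\vec b_k$ satisfy the same atomic formulas (depth $0$), which gives the required isomorphism of the generated substructures sending $\vec a_i \mapsto \vec b_i$ (one must check this is well-defined and isometric on the generated substructure, which follows from agreement on all atomic formulas, including those expressing distances between terms).

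The inductive step is the heart of the argument. Suppose after $j$ rounds the tuples agree up to depth $k-j$, and Player I plays, say, a tuple $\vec a_{j+1} \in \m$ (the other case is symmetric). Player II must find $\vec b_{j+1} \in \n$ so that the new pair agrees up to depth $k-j-1$. Consider the type $p(\vec y)$ over $\vec b_1,\ldots,\vec b_j$ consisting of all conditions ``$\varphi(\vec b_1,\ldots,\vec b_j,\vec y) = \varphi^\m(\vec a_1,\ldots,\vec a_j,\vec a_{j+1})$'' where $\varphi$ ranges over formulas of depth $\le k-j-1$. I would check that every finite subset of $p$ is approximately satisfiable in $\n$: a finite subset involves finitely many formulas $\varphi_1,\ldots,\varphi_m$ of depth $\le k-j-1$, and the statement ``there exists $\vec y$ making all $|\varphi_i(\vec b_1,\ldots,\vec b_j,\vec y) - r_i|$ small'' can be expressed (via a continuous combination and an $\inf_{\vec y}$) as a formula in $\vec x_1,\ldots,\vec x_j$ of depth $\le k-j$; since the analogous statement holds in $\m$ with $\vec a_{j+1}$ as a witness, and the $j$-tuples agree up to depth $k-j$, it holds in $\n$ too. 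Hence $p$ is a consistent type over a countable parameter set, so by $\aleph_1$-saturation of $\n$ it is realized by some $\vec b_{j+1} \in \n$, and this is Player II's move. (One subtlety: tuples are allowed to be countably infinite, so $p$ is a type in countably many variables over countably many parameters — still fine for $\aleph_1$-saturation; and the ``finitely satisfiable'' reduction uses only finitely many coordinates of $\vec y$ at a time, so the depth bookkeeping goes through.)

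The main obstacle I anticipate is the careful verification that the relevant ``there exists a good witness'' assertion is genuinely a formula of depth at most $k-j$ in the continuous setting — in particular handling the approximate (rather than exact) nature of satisfaction in continuous logic, so that finite satisfiability of $p$ is extracted from depth-$(k{-}j)$ agreement rather than something stronger. Concretely, one wants: for $\varepsilon > 0$, the quantity $\inf_{\vec y} \max_i |\varphi_i(\vec x_1,\ldots,\vec x_j,\vec y) - r_i|$ is a depth-$\le(k-j)$ formula, its value at $(\vec a_1,\ldots,\vec a_j)$ in $\m$ is $0$ (witnessed by $\vec a_{j+1}$), hence its value at $(\vec b_1,\ldots,\vec b_j)$ in $\n$ is $0$, which yields approximate witnesses and then, by saturation, an exact realization of $p$. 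Once this is set up cleanly the induction runs, and after $k$ rounds depth-$0$ agreement delivers the isomorphism of generated substructures, completing the proof that Player II wins and hence $\m \equiv_k^{EF} \n$.
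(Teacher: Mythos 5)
Your proposal is correct and follows essentially the same route as the paper: the paper phrases the argument as an induction on $k$ using the expanded structures $(\m,\vec a_1)$ and $(\n,\vec b_1)$, whereas you unroll that induction into an invariant (depth-$(k-j)$ agreement after round $j$), but the key step is identical in both — realizing the type of Player I's move via the depth-$\le(k-j)$ formula $\inf_{\vec y}\max_i|\varphi_i-r_i|$ together with $\aleph_1$-saturation. Both treatments likewise defer the forward implication to the routine induction noted just before the lemma.
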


\begin{proof}
We prove the lemma by induction on $k$.  Suppose first that $k=0$ and that $\m\equiv_0 \n$.  Let $\m_0$ and $\n_0$ be the substructures of $\m$ and $\n$ respectively generated by the emptyset.  It follows immediately that there is an isomorphism between $\m_0$ and $\n_0$ that sends $c^\m$ to $c^\n$ for each constant symbol $c$, whence II always wins $\G(\m,\n,0)$.

Now suppose that $k>0$ and inductively assume that the lemma holds for all integers smaller than $k$.  We now describe a winning strategy for II in $\G(\m,\n,k)$.  Suppose that I first plays $\vec{a_1}\in \m$ (the case that I's first move is in $\n$ is analogous).  Consider the set $\Gamma(\vec x)$ given by
$$\Gamma(\vec x):=\{|\varphi(\vec x)-r|=0 \ : \ \depth(\varphi)<k, \ \varphi^\m(\vec{a_1})=0\}.$$  We claim that $\Gamma(\vec x)$ is finitely satisfiable in $\n$.  Towards this end, consider conditions ``$|\varphi_i(\vec x)-r_i|=0$'' in $\Gamma$, $i=1,\ldots,p$.  Let $\sigma:=\inf_{\vec x}\max_{1\leq i \leq p}|\varphi_i(\vec x)-r_i|$.  Note that $\sigma^\m=0$ and that $\depth(\sigma)\leq k$.  Since $\m\equiv_k \n$, we have that $\sigma^\n=0$, witnessing that $\Gamma$ is finitely satisfiable.  Since $\n$ is $\aleph_1$-saturated, it follows that there is $\vec{b_1}\in \n$ satisfying $\Gamma$.  The strategy for II in $\G(\m,\n,k)$ starts by demanding that II play $\vec{b_1}$.  Note now that $(\m,\vec{a_1})\equiv_{k-1}(\n,\vec{b_1})$, so by induction we have that $(\m,\vec{a_1})\equiv_{k-1}^{EF}(\n,\vec{b_1})$.  The rest of the strategy for II in $\G(\m,\n,k)$ is to have II play according the winning strategy for II in $\G((\m,\vec{a_1}),(\n,\vec{b_1}),k-1)$, where, for $p\geq 2$, round $p$ in $\G(\m,\n,k)$ is viewed as round $p-1$ in $\G((\m,\vec{a_1}),(\n,\vec{b_1}),k-1)$.  This strategy is clearly a winning strategy for II in $\G(\m,\n,k)$, whence $\m\equiv_k^{EF}\n$.   
\end{proof}

In the sequel, we will often assume that $\m\equiv_k^{EF}\n$ and that $\m$ is nonseparable.  For reasons that will become clear in the next section, we actually want to know that $\n$ is also nonseparable.  

\begin{lemma}
Suppose that $\m\equiv_2^{EF} \n$ and $\m$ is nonseparable.  Then $\n$ is nonseparable.
\end{lemma}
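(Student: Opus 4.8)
The plan is to prove the contrapositive: assuming that $\n$ is separable, I will show that $\m$ is separable. The point of departure is the observation that a separable metric structure is generated, as a closed substructure, by a single countable tuple: if $D\subseteq\n$ is countable and dense and $\vec c$ enumerates $D$, then the substructure of $\n$ generated by $\vec c$ is a closed subset of $\n$ containing $D$, hence contains $\overline D=\n$, so $\vec c$ generates $\n$. Fix such a tuple $\vec c$.

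Next I would set up a play of $\G(\m,\n,2)$ designed to force II to reveal a countable generating tuple of $\m$. Fix a winning strategy for II. In round $1$, let I play the tuple $\vec c$ in $\n$ and let $\vec a\in\m$ be II's response dictated by the strategy; write $\m_0$ for the substructure of $\m$ generated by $\vec a$. Since $\m_0$ is generated by a countable tuple, it is separable, so it suffices to prove $\m_0=\m$. Suppose toward a contradiction that $\m_0\neq\m$, and fix $b\in\m\setminus\m_0$. In round $2$, let I play the tuple $(b)$ in $\m$ and let $\vec d\in\n$ be II's response. Since II has followed a winning strategy, there is an isomorphism $\phi$ from the substructure $\m_1$ of $\m$ generated by $\vec a\cup(b)$ onto the substructure $\n_1$ of $\n$ generated by $\vec c\cup\vec d$, with $\phi(\vec a)=\vec c$ and $\phi(b)=\vec d$.

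The contradiction then comes as follows. Because $\vec c$ already generates $\n$, we have $\n_1=\n$. As $\phi$ is an isomorphism of metric structures, it is in particular an isometric bijection preserving all of the structure, so its restriction to the closed substructure $\m_0\subseteq\m_1$ is an isometry onto a closed substructure $\phi(\m_0)\subseteq\n$ containing $\phi(\vec a)=\vec c$; since $\vec c$ generates $\n$, this forces $\phi(\m_0)=\n=\n_1=\phi(\m_1)$, and then injectivity of $\phi$ gives $\m_0=\m_1$, whence $b\in\m_0$, contradicting the choice of $b$. Therefore $\m_0=\m$ and $\m$ is separable. I expect the only genuinely delicate point to be the bookkeeping ensuring that $\phi(\m_0)$ really is a closed substructure of $\n$, so that ``$\vec c$ generates $\n$'' may be invoked for it as well as for $\n_1$; it is precisely this double use of the generating tuple that explains why two rounds are required — the first round identifies $\n$ with the structure generated by a tuple sent to $\vec a\in\m$, and the second produces a witness $b$ that this image cannot contain.
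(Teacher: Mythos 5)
Your proof is correct, and it runs the same two-round game as the paper's — round 1 plays a countable tuple from $\n$, round 2 plays a witness from $\m$ that II's response $\vec a$ cannot be dense — but you take the contrapositive and extract the contradiction structurally rather than metrically. The paper argues directly: given \emph{any} tuple $\vec b$ from $\n$, nonseparability of $\m$ yields $\epsilon>0$ and $c\in\m$ with $d(c,a_i)\geq\epsilon$ for all $i$; since the isomorphism produced by II's winning strategy is in particular an isometry, II's reply $d$ satisfies $d(d,b_i)\geq\epsilon$ for all $i$, so $\vec b$ is not dense. That version is shorter than your route through generated substructures and injectivity of $\phi$, and it avoids the one extra ingredient your argument needs, namely that the substructure generated by a countable tuple is separable — true here because the language of tracial von Neumann algebras is separable, but worth noting since the lemma is stated for general $\l$-structures, and the paper's quantitative argument needs no hypothesis on the language at all. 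The point you flag as delicate, that $\phi(\m_0)$ is a closed substructure of $\n_1=\n$ containing $\vec c$, does go through: $\phi$ is an isometric bijection between the complete generated substructures, so it carries the closed set $\m_0$ onto a closed subset of $\n$, and density of $\vec c$ then forces $\phi(\m_0)=\n=\phi(\m_1)$ as you say.
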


\begin{proof}
Let $\vec{b}$ be any tuple from $\n$.  Let I play $\vec{b}$ and have II respond with $\vec{a}$ from $\m$.  Since $\m$ is nonseparable, there is $\epsilon>0$ and $c\in \m$ such that $d(c,a_i)\geq \epsilon$ for all $i$.  Have I play $c$ and II responds with $d\in \n$.  Since II wins $\G(\m,\n,2)$, we have that $d(d,b_i)\geq \epsilon$ for all $i$, whence $\vec{b}$ is not dense in $\n$.
\end{proof}

\subsection{McDuff's examples and property $\V$}

First, we recall McDuff's examples.  Let $\Gamma$ be a countable group.  For $i\geq 1$, let $\Gamma_i$ denote an isomorphic copy of $\Gamma$ and let $\Lambda_i$ denote an isomorphic copy of $\mathbb{Z}$.  Let $\tilde{\Gamma}:=\bigoplus_{i\geq 1}\Gamma_i$.  If $S_\infty$ denotes the group of permutations of $\N$ with finite support, then there is a natural action of $S_\infty$ on $\bigoplus_{i\geq 1} \Gamma$ (given by permutation of indices), whence we may consider the semidirect product $\tilde{\Gamma}\rtimes S_\infty$.  Given these conventions, we can now define two new groups:

$$T_0(\Gamma):=\langle \tilde{\Gamma}, (\Lambda_i)_{i\geq 1} \ | \ [\Gamma_i,\Lambda_j]= \text{ for }i\geq j\rangle$$ and

$$T_1(\Gamma):=\langle \tilde{\Gamma}\rtimes S_\infty, (\Lambda_i)_{i\geq 1} \ | \ [\Gamma_i,\Lambda_j]= \text{ for }i\geq j\rangle.$$

Note that if $\Delta$ is a subgroup of $\Gamma$ and $\alpha\in \{0,1\}$, then $T_\alpha(\Delta)$ is a subgroup of $T_\alpha(\Gamma)$.  Given a sequence $\balpha\in 2^{\leq \omega}$, we define a group $K_{\balpha}(\Gamma)$ as follows:
\begin{enumerate}
\item $K_{\balpha}(\Gamma):=\Gamma$ if $\balpha=\emptyset$;
\item $K_{\balpha}(\Gamma):=(T_{\alpha_0}\circ T_{\alpha_1}\circ \cdots T_{\alpha_{n-1}})(\Gamma)$ if $\balpha\in 2^n$;
\item $K_{\balpha}$ is the inductive limit of $(K_{\balpha|n})_n$ if $\balpha\in 2^\omega$.
\end{enumerate}

We then set $\m_{\balpha}(\Gamma):=L(T_{\balpha}(\Gamma))$.  When $\Gamma=\mathbb{F}_2$, we simply write $\m_{\balpha}$ instead of $\m_{\balpha}(\mathbb{F}_2)$; these are the McDuff examples referred to the introduction.

Given $n\geq 1$, we let $\tilde{\Gamma}_{\balpha,n}$ denote the subgroup of $T_{\alpha_0}(K_{\balpha^\#}(\Gamma))$ given by the direct sum of the copies of $K_{\balpha^\#}(\Gamma)$ indexed by those $i\geq n$ and we let $P_{\balpha,n}:=L(\tilde{\Gamma}_{\balpha,n})$.  We define a \emph{generalized McDuff ultraproduct corresponding to $\balpha$ and $\Gamma$} to be an ultraproduct of the form $\prod_\u \m_{\balpha}(\Gamma)^{\otimes t_s}$ and we refer to subalgebras of the form $\prod_\u P_{\balpha,n_s}^{\otimes t_s}$ as \emph{special}.

We will need the following key facts:

\begin{facts}\label{key.fact}
Suppose that $\balpha\in 2^{<\omega}$ is nonempty, $\Gamma$ is a countable group, and $(t_s)$ is a sequence of natural numbers.
\begin{enumerate}
\item Suppose that $(m_s)$ and $(n_s)$ are two sequences of natural numbers such that $n_s<m_s$ for all $s$.  Then $(\prod_\u P_{\balpha,m_s}^{\otimes t_s})'\cap (\prod_\u P_{\balpha,n_s}^{\otimes t_s})$ is a generalized McDuff ultraproduct corresponding to $\balpha^\#$ and $\Gamma$.
\item For any sequence $(n_s)$, there is a pair of unitaries $\vec a$ from $\prod_\u \m_{\balpha}^{\otimes t_s}$ such that $\prod_\u P_{\balpha,n_s}^{\otimes t_s}=C(\vec a)$.
\item Given any separable subalgebra $A$ of $\prod_\u \m_{\balpha}^{\otimes t_s}$, there is a sequence $(n_s)$ such that $\prod_\u P_{\balpha,n_s}^{\otimes t_s}\subset A'\cap \prod_\u \m_{\balpha}^{\otimes t_s}$.
\end{enumerate}
\end{facts}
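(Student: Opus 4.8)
The plan is to reduce all three assertions to two ingredients: an exact computation at each finite stage $s$, carried out at the level of the groups and of finite von Neumann tensor products, and a transfer of these computations to the ultraproduct. Throughout, write $Q:=\m_{\balpha^\#}(\Gamma)=L(K_{\balpha^\#}(\Gamma))$, a $\mathrm{II}_1$ factor, and recall that since $\tilde\Gamma_{\balpha,n}=\bigoplus_{i\geq n}(K_{\balpha^\#}(\Gamma))_i$ is a direct sum of groups we have $P_{\balpha,n}=L(\tilde\Gamma_{\balpha,n})=\bigotimes_{i\geq n}Q_i$, an infinite von Neumann tensor product of copies of $Q$. For (1), I would first compute the finite-stage relative commutant. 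Since $n_s<m_s$, we have $P_{\balpha,n_s}=\bigl(\bigotimes_{n_s\leq i<m_s}Q_i\bigr)\,\bar\otimes\,P_{\balpha,m_s}$, and because $P_{\balpha,m_s}$ is a factor the relative commutant of a tensor subfactor is the complementary factor, giving $P_{\balpha,m_s}'\cap P_{\balpha,n_s}=\bigotimes_{n_s\leq i<m_s}Q_i\cong Q^{\otimes(m_s-n_s)}$; tensoring $t_s$ independent copies yields $\bigl(P_{\balpha,m_s}^{\otimes t_s}\bigr)'\cap P_{\balpha,n_s}^{\otimes t_s}\cong Q^{\otimes t_s(m_s-n_s)}$. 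The ultraproduct of these isomorphisms is exactly a generalized McDuff ultraproduct corresponding to $\balpha^\#$ and $\Gamma$, namely $\prod_\u Q^{\otimes t_s(m_s-n_s)}$. One inclusion into the desired relative commutant is immediate: a sequence $(x_s)$ with each $x_s$ in the finite-stage relative commutant commutes with $\prod_\u P_{\balpha,m_s}^{\otimes t_s}$. The reverse inclusion is the crux, discussed below.

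For (2), I would exhibit the unitaries explicitly at each finite stage and then diagonalize. Recall $\m_\balpha=L(T_{\alpha_0}(K_{\balpha^\#}(\Gamma)))$, and that the relation $[\Gamma_i,\Lambda_j]=1$ for $i\geq j$ means the generator $\lambda_{n_s}$ of $\Lambda_{n_s}$ commutes with $(K_{\balpha^\#}(\Gamma))_i$ for all $i\geq n_s$, hence with $P_{\balpha,n_s}$, but not with the copies indexed by $i<n_s$. Choosing in addition a noncentral element $\gamma$ from a copy $(K_{\balpha^\#}(\Gamma))_{i_0}$ with $i_0<n_s$, the group-theoretic centralizer of the pair $(\lambda_{n_s},\gamma)$ is exactly $\tilde\Gamma_{\balpha,n_s}$; combining this with the standard description of the relative commutant of a single group element in terms of finiteness of conjugation orbits, and with the fact that the centralizer of every nontrivial power of the infinite-order element $\lambda_{n_s}$ coincides with that of $\lambda_{n_s}$, computes $\{\lambda_{n_s},\gamma\}'\cap\m_\balpha=P_{\balpha,n_s}$. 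Passing to the $t_s$-fold tensor power and taking the two unitaries to be the diagonal images of $\lambda_{n_s}$ and $\gamma$ across the $t_s$ factors (so that the relevant group centralizer is the product of the single-factor centralizers) produces a pair $\vec a_s$ with $\{\vec a_s\}'\cap\m_\balpha^{\otimes t_s}=P_{\balpha,n_s}^{\otimes t_s}$. The desired pair is then $\vec a:=(\vec a_s)_\u$, for which $C(\vec a)\supseteq\prod_\u P_{\balpha,n_s}^{\otimes t_s}$ is immediate and, again, the reverse inclusion is the crux.

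For (3), fix a countable $\|\cdot\|_2$-dense subset $\{x^{(k)}\}_k$ of the separable algebra $A$, with $x^{(k)}=(x^{(k)}_s)_\u$. Every element of $\m_\balpha^{\otimes t_s}$ is approximable by elements supported on finitely many group-coordinates, and the relation $[\Gamma_i,\Lambda_j]=1$ for $i\geq j$ shows that $P_{\balpha,n}$ commutes with any element supported on coordinates $<n$; hence, for each fixed element and each $\varepsilon>0$, there is $n$ beyond which $P_{\balpha,n}^{\otimes t_s}$ commutes with it to within $\varepsilon$. A diagonal choice $n_s\to\infty$, growing slowly enough in $s$ to handle the first $k_s$ generators to within $1/s$, then gives $\lim_\u\|[p_s,x^{(k)}_s]\|_2=0$ for every $k$ and every $(p_s)$ representing an element of $\prod_\u P_{\balpha,n_s}^{\otimes t_s}$, i.e. $\prod_\u P_{\balpha,n_s}^{\otimes t_s}\subseteq A'\cap\prod_\u\m_\balpha^{\otimes t_s}$. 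The main obstacle lies in the reverse inclusions of (1) and (2): showing that approximate commutation with $\prod_\u P_{\balpha,m_s}^{\otimes t_s}$ (respectively with the fixed pair $\vec a$) forces an element into the ultraproduct of the finite-stage relative commutants. This cannot come from a soft averaging argument, since $Q$ is McDuff and hence has property Gamma, so these inclusions have no spectral gap in the naive bimodule sense, and in general the relative commutant of an ultraproduct is strictly larger than the ultraproduct of relative commutants. The essential input of \cite{BCI} is that the distinguished unitaries $\lambda_n$ are \emph{good}: the conjugation they generate has a uniform spectral gap transverse to the relative commutant, so that there is a constant $C$ independent of $s$ with $\|x_s-E_s(x_s)\|_2\leq C\max_i\|[x_s,a^{(i)}_s]\|_2$, where $E_s$ is the trace-preserving conditional expectation onto the finite-stage relative commutant. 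Feeding this uniform estimate into the ultraproduct closes the reverse inclusions, and establishing (or invoking from \cite{BCI}) this uniform spectral gap is the technical heart of the argument.
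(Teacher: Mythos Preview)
The paper does not prove these facts; it simply records that they are contained in Sections~2 and~3 of \cite{BCI}, with (1) embedded in the proof of Lemma~3.11 there. Your proposal is therefore a reconstruction of material the paper imports wholesale, and in broad outline it matches what \cite{BCI} does: compute at each finite stage, then pass to the ultraproduct, with the only nontrivial step being the identification of the relative commutant inside the ultraproduct with the ultraproduct of the finite-stage relative commutants. You correctly isolate this reverse inclusion as the crux and correctly name the mechanism that closes it, namely the uniform $(2,100)$-residual (spectral-gap) estimate for the distinguished Haar unitaries, which is exactly Lemma~2.9 of \cite{BCI} (cf.\ Section~4.2 of the present paper, where it is noted that special subalgebras are good subalgebras with respect to good unitaries).

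One place where your sketch needs more care is the explicit construction in~(2). First, pinning down $P_{\balpha,n_s}$ as the commutant of $\{\lambda_{n_s},\gamma\}$ requires more than the group-theoretic centralizer computation you indicate: one must check that every element outside $\tilde\Gamma_{\balpha,n_s}$ has infinite orbit under conjugation by $\lambda_{n_s}$ and $\gamma$ (and one must handle the $S_\infty$ factor in the $T_1$ case, and the other $\Lambda_j$'s). Second, and more delicately, passing to the $t_s$-fold tensor power via diagonal unitaries $u^{\otimes t_s},v^{\otimes t_s}$ does not in general force the commutant to be $C(u,v)^{\otimes t_s}$; the commutant of a diagonal tensor can be strictly larger than the tensor of commutants. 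What makes the argument go through in \cite{BCI} is not a bare centralizer identity but the quantitative residual estimate itself, which \emph{is} stable under tensor products with a uniform constant. So your instinct to defer to \cite{BCI} at precisely this point is the right one; just be aware that the stability under $\otimes t_s$ is part of what that input provides, not something the diagonal construction gives for free.
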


The proofs of the above facts are contained in \cite[Sections 2 and 3]{BCI}.  In particular, the proof of (1) is embedded in the proof of \cite[Lemma 3.11]{BCI}.

We recall the definition of property $\V$.

\begin{df}
Let $\m$ be a nonseparable von Neumann algebra.  We say that $\m$ has property $\V$ if there is a separable subalgebra $A\subseteq \m$ such that, for all separable $B\subseteq A'\cap \m$ and all separable $C\subseteq \m$, there is $u\in U(\m)$ such that $uBu^*\subseteq C'\cap \m$.
\end{df}

The following is \cite[Lemma 4.4]{BCI}.

\begin{fact}\label{hasV}
If $\Gamma$ is any countable group, then $\prod_\u L(T_1(\Gamma))^{\otimes t_s}$ has $\V$.
\end{fact}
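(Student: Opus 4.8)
The plan is to exploit the permutation action of $S_\infty\subseteq T_1(\Gamma)$, which lets one ``shuffle'' the copies $\Gamma_i$, together with Facts~\ref{key.fact}. Write $\balpha=(1)$, so that $\m:=\prod_\u L(T_1(\Gamma))^{\otimes t_s}=\prod_\u \m_{\balpha}(\Gamma)^{\otimes t_s}$ is a generalized McDuff ultraproduct corresponding to $\balpha$ and $\Gamma$, and recall that $P_{\balpha,n}=L(\bigoplus_{i\geq n}\Gamma_i)$. To produce the witnessing subalgebra $A$, I would apply Fact~\ref{key.fact}(2) with the constant sequence $n_s=1$ to obtain a pair of unitaries $\vec a$ in $\m$ with $C(\vec a)=\prod_\u P_{\balpha,1}^{\otimes t_s}$, and take $A$ to be the von Neumann algebra they generate; then $A$ is separable and $A'\cap\m=\prod_\u P_{\balpha,1}^{\otimes t_s}$.

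Now fix separable $B\subseteq A'\cap\m$ and separable $C\subseteq\m$. By Fact~\ref{key.fact}(3) applied to $C$, there is a sequence $(n_s)$, which one may take with $n_s\geq 2$ for all $s$, such that the special subalgebra $Q:=\prod_\u P_{\balpha,n_s}^{\otimes t_s}$ is contained in $C'\cap\m$. Since $Q\subseteq C'\cap\m$, it suffices to find $u\in U(\m)$ with $uBu^*\subseteq Q$.

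The crux is a ``finite support'' reduction. Assuming, as one may, that $\u$ is countably incomplete (so that $\m$ is $\aleph_1$-saturated), and using that $B\subseteq\prod_\u P_{\balpha,1}^{\otimes t_s}=\prod_\u L(\bigoplus_{i\geq 1}\Gamma_i)^{\otimes t_s}$ is separable, I would run a diagonal argument to produce finite sets $E_s\subseteq\{1,2,\dots\}$ with $B\subseteq\prod_\u L(\bigoplus_{i\in E_s}\Gamma_i)^{\otimes t_s}$: approximate each of countably many generators of $B$, in each coordinate $s$, by finitely supported elements, and diagonalize along a decreasing chain of $\u$-large sets with empty intersection. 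Then, for each $s$, choose a finitary permutation $\pi_s\in S_\infty$ with $\pi_s(E_s)\subseteq\{i:i\geq n_s\}$ (possible since the target set is infinite); acting diagonally on the $t_s$ tensor factors, $\pi_s$ yields a unitary $u^{(s)}\in U(L(T_1(\Gamma))^{\otimes t_s})$ carrying $L(\bigoplus_{i\in E_s}\Gamma_i)^{\otimes t_s}$ onto $L(\bigoplus_{i\in\pi_s(E_s)}\Gamma_i)^{\otimes t_s}\subseteq P_{\balpha,n_s}^{\otimes t_s}$. Setting $u:=(u^{(s)})_s\in U(\m)$ then gives $uBu^*\subseteq\prod_\u P_{\balpha,n_s}^{\otimes t_s}=Q\subseteq C'\cap\m$, completing the proof.

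The step I expect to require the most care is the finite-support reduction, since it is precisely what makes the finitary permutations of $S_\infty$ effective after one passes to the ultraproduct: inside a single copy of $L(T_1(\Gamma))$ no unitary conjugates $L(\bigoplus_{i\geq 1}\Gamma_i)$ into $L(\bigoplus_{i\geq 2}\Gamma_i)$, so the use of the ultraproduct (and of the countable incompleteness of $\u$) is genuinely needed. Everything else is bookkeeping with Facts~\ref{key.fact}.
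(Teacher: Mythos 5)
The paper does not actually prove this fact --- it is quoted directly from \cite{BCI} (Lemma 4.4) --- but your argument is correct and is essentially the proof given there: take $A$ generated by a pair of unitaries with $A'\cap\m=\prod_\u P_{\balpha,1}^{\otimes t_s}$, reduce a separable $B$ to coordinatewise finite supports by a diagonalization along the countably incomplete ultrafilter, and use the finitary permutations in $S_\infty\subseteq T_1(\Gamma)$ to conjugate those supports into a deep special subalgebra lying in $C'\cap\m$ (supplied by Fact \ref{key.fact}(3)). The only points requiring care --- choosing representatives of elements of $B$ inside $P_{\balpha,1}^{\otimes t_s}$ and keeping the finitely supported approximants uniformly bounded, both handled by conditional expectations --- are standard, and your sketch addresses them.
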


\begin{notation}
If $\vec a$ and $\vec b$ are tuples from $\m$, we set $\vec a\leq \vec b$ if and only if $C(\vec b)\subseteq C(\vec a)$.  As with any preorder, we write $\vec{a}<\vec{b}$ to indicate that $\vec{a}\leq \vec{b}$ but $\vec{b}\not\leq \vec{a}$.
\end{notation}
\begin{df}
Let $k$ be a natural number.  We define what it means for a nonseparable von Neumann algebra $\m$ to have $\V$ at depth $k$:
\begin{itemize}
\item $\m$ has $\V$ at depth $0$ if it has $\V$;
\item If $k>0$, then $\m$ has $\V$ at depth $k$ if for any $\vec a$, there is $\vec b>\vec a$ such that, for all $\vec c>\vec b$, there is $\vec d>\vec c$ for which there is a von Neumann algebra $\n$ with $C(\vec c)'\cap C(\vec b)\subseteq \n \subseteq C(\vec d)'\cap C(\vec a)$ and such that $\n$ has $\V$ at depth $k-1$.  
\end{itemize}
\end{df}

In connection with this definition, let us set up some further notation.

\begin{notation}
Let $\m$ be a nonseparable von Neumann algebra and let $\vec a$, $\vec b$, $\vec c$, and $\vec d$ range over tuples from $\m$.  Furthermore, let $k\geq 1$.
\begin{enumerate}
\item For $\vec a<\vec b<\vec c< \vec d$, $\Phi(\vec a,\vec b, \vec c, \vec d; k)$ denotes the statement ``there is a von Neumann algebra $\n$ with $C(\vec c)'\cap C(\vec b)\subseteq \n \subseteq C(\vec d)'\cap C(\vec a)$ and such that $\n$ has $\V$ at depth $k-1$.'' 
\item For $\vec a<\vec b<\vec c$, $\Phi(\vec a,\vec b, \vec c;k)$ denotes the statement ``there is $\vec d>\vec c$ such that $\Phi(\vec a,\vec b,\vec c, \vec d;k)$ holds.''
\item For $\vec a<\vec b$, $\Phi(\vec a,\vec b;k)$ denotes the statement ``for all $\vec c>\vec b$, $\Phi(\vec a,\vec b,\vec c;k)$ holds.''
\item $\Phi(\vec a;k)$ denotes the statement ``there is $\vec b>\vec a$ such that $\Phi(\vec a,\vec b;k)$ holds.'' 
\end{enumerate}
\end{notation}

The definition of $\m$ having $\V$ at depth $k$ can thus be recast as:  for every $\vec a$, $\Phi(\vec a;k)$ holds.  The following is the main result of \cite{BCI} and appears there as Theorem 4.2 (really, Remark 4.3).

\begin{fact}\label{mainfact}
Suppose that $\balpha\in 2^\omega$.  Then $\m_{\balpha}^\u$ has $\V$ at depth $k$ if and only if $\alpha_k=1$.
\end{fact}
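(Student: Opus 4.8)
The plan is to prove, by induction on $k$, the following slightly more general assertion: for every $\bbeta\in 2^{\leq\omega}$ of length greater than $k$ and every sequence $(t_s)$ of positive integers, the generalized McDuff ultraproduct $N=\prod_\u\m_{\bbeta}^{\otimes t_s}$ corresponding to $\bbeta$ and $\mathbb{F}_2$ has $\V$ at depth $k$ if and only if $\beta_k=1$ (writing $\beta_i:=\bbeta(i)$); the statement to be proved is the special case $\bbeta=\balpha$, $t_s\equiv 1$. The passage to generalized McDuff ultraproducts, rather than plain ultrapowers $\m_{\bbeta}^\u$, is forced on us: by Fact \ref{key.fact}(1) (which holds for $\bbeta\in 2^\omega$ by the same argument) the relative commutants that arise one level down are again generalized McDuff ultraproducts over $\mathbb{F}_2$, but for the shifted sequence $\bbeta^\#$ and with a new, typically non-constant, sequence of tensor exponents. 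Since $(\bbeta^\#)_{k-1}=\beta_k$, the recursion simply peels bits off the front of $\bbeta$, so after $k$ steps the whole question reduces to the base case for $(\beta_k,\beta_{k+1},\ldots)$. Throughout, Facts \ref{key.fact}(2) and \ref{key.fact}(3) apply to every algebra in sight, and I use that Fact \ref{key.fact}(3) gives a \emph{proper} inclusion.

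In the base case $k=0$, having $\V$ at depth $0$ is just having property $\V$. If $\beta_0=1$ then $\m_{\bbeta}=L(T_1(K_{\bbeta^\#}(\mathbb{F}_2)))$, so $N=\prod_\u L(T_1(\Gamma))^{\otimes t_s}$ with the countable group $\Gamma=K_{\bbeta^\#}(\mathbb{F}_2)$, and $N$ has $\V$ directly by Fact \ref{hasV}. If $\beta_0=0$ then $N=\prod_\u L(T_0(K_{\bbeta^\#}(\mathbb{F}_2)))^{\otimes t_s}$, and one needs the companion of Fact \ref{hasV} from \cite{BCI}: no ultraproduct of tensor powers of $L(T_0(\Gamma))$ has $\V$. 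This negative statement — whose mechanism is precisely the \emph{absence} of the permutation action of $S_\infty$ on the copies of $\Gamma$ inside $T_0(\Gamma)$, as against its presence inside $T_1(\Gamma)$ — carries the real weight of the base case, and I would simply cite it.

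For the direction showing that $\beta_k=1$ implies $N$ has $\V$ at depth $k$: since $(\bbeta^\#)_{k-1}=1$, the inductive hypothesis gives $\V$ at depth $k-1$ for every generalized McDuff ultraproduct for $\bbeta^\#$, and I would check $\Phi(\vec a;k)$ for each $\vec a$ as follows. Use Fact \ref{key.fact}(3) and then Fact \ref{key.fact}(2) to produce a pair of unitaries $\vec b>\vec a$ with $C(\vec b)=\prod_\u P_{\bbeta,n_s}^{\otimes t_s}$ a special subalgebra; given $\vec c>\vec b$, repeat one level down to get a pair of unitaries $\vec d>\vec c$ with $C(\vec d)=\prod_\u P_{\bbeta,m_s}^{\otimes t_s}$ and $n_s<m_s$ for all $s$. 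Then $\n:=C(\vec d)'\cap C(\vec b)=(\prod_\u P_{\bbeta,m_s}^{\otimes t_s})'\cap(\prod_\u P_{\bbeta,n_s}^{\otimes t_s})$ is a generalized McDuff ultraproduct for $\bbeta^\#$ by Fact \ref{key.fact}(1), hence has $\V$ at depth $k-1$; and the inclusions $C(\vec c)'\cap C(\vec b)\subseteq\n\subseteq C(\vec d)'\cap C(\vec a)$ are immediate from $C(\vec d)\subseteq C(\vec c)$ and $C(\vec b)\subseteq C(\vec a)$. That is exactly what $\Phi(\vec a;k)$ asks for.

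The reverse direction — that $\V$ at depth $k$ forces $\beta_k=1$ — is where I expect the genuine difficulty, and I would argue it contrapositively. Assume $\beta_k=0$, so $(\bbeta^\#)_{k-1}=0$ and, by the inductive hypothesis, every generalized McDuff ultraproduct for $\bbeta^\#$ \emph{fails} $\V$ at depth $k-1$; suppose toward a contradiction that $N$ has $\V$ at depth $k$. Using Facts \ref{key.fact}(3) and \ref{key.fact}(2), choose $\vec a$ with $C(\vec a)$ a special subalgebra, extract the $\vec b>\vec a$ the definition provides, and then, running the same two facts once more, choose $\vec c>\vec b$ with $C(\vec c)$ a special subalgebra properly inside $C(\vec b)$. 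For any $\vec d>\vec c$ and any $\n$ with $C(\vec c)'\cap C(\vec b)\subseteq\n\subseteq C(\vec d)'\cap C(\vec a)$, Facts \ref{key.fact}(1) and \ref{key.fact}(3) trap $\n$ inside a generalized McDuff ultraproduct for $\bbeta^\#$ (which fails $\V$ at depth $k-1$) while forcing $\n$ to contain the relative commutant of the special subalgebra $C(\vec c)$ inside $C(\vec b)$. The decisive — and, I expect, hardest — step is to parlay this into the conclusion that $\n$ itself fails $\V$ at depth $k-1$, contradicting the choice of $\n$: this needs a rigidity input, roughly that a von Neumann subalgebra of a $T_0$-type ultraproduct confined in this way is pinned down closely enough by a generalized McDuff ultraproduct for $\bbeta^\#$ that failure of $\V$ at depth $k-1$ transfers to it. Establishing such control over relative commutants and unitary conjugacy — essentially the theory of ``good unitaries'' — is the technical heart of \cite{BCI}, and, like the base case, it runs on the asymmetry between $T_1$ (which carries the $S_\infty$-action) and $T_0$ (which does not); I would import it from there rather than redo it.
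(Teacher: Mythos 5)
This Fact is not proved in the paper at all: it is quoted verbatim from \cite{BCI} (Theorem 4.2 / Remark 4.3 there), so there is no in-paper argument to compare yours against. Judged on its own terms, your proposal correctly identifies the right induction (on $k$, over the class of generalized McDuff ultraproducts rather than plain ultrapowers, peeling off the leading bit via $\bbeta\mapsto\bbeta^\#$), and the ``if'' direction is assembled correctly from the quoted ingredients: Fact \ref{hasV} handles $k=0$ with $\beta_0=1$, and for $k>0$ the chain special $\to$ arbitrary $\to$ special produced by Facts \ref{key.fact}(2),(3) together with Fact \ref{key.fact}(1) yields exactly the witness $\n$ that $\Phi(\vec a;k)$ demands, with the inclusions $C(\vec c)'\cap C(\vec b)\subseteq\n\subseteq C(\vec d)'\cap C(\vec a)$ following from monotonicity of commutants. (Two small housekeeping points: Fact \ref{key.fact} is stated only for nonempty $\balpha\in 2^{<\omega}$, so for infinite strings you should either extend it as you suggest or, as the paper's Corollary does, write $\m_{\bbeta}=L(K_{\bbeta|(k+1)}(\Gamma))$ with $\Gamma=K$ of the tail and work with the finite prefix; and you should note that special subalgebras are nested in the index sequences, so that the two applications of Fact \ref{key.fact}(3) can be arranged with $n_s<m_s$.)

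The ``only if'' direction, however, is not something you prove: you defer both the base case ($T_0$-type ultraproducts fail $\V$) and the inductive transfer of failure of $\V$ at depth $k-1$ to the sandwiched algebra $\n$ to \cite{BCI}. You are right that this is unavoidable from the material quoted in this paper --- Facts \ref{key.fact} and \ref{hasV} say nothing negative about $T_0$, so no argument built only from them could distinguish $\alpha_k=0$ from $\alpha_k=1$ --- and the ``rigidity input'' you gesture at (controlling arbitrary $\n$ squeezed between $C(\vec c)'\cap C(\vec b)$ and $C(\vec d)'\cap C(\vec a)$ well enough to transport failure of $\V$, via the residuality estimates for good unitaries) is precisely the content of \cite[Lemma 3.11 and Theorem 4.2]{BCI}. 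Since the paper itself imports the entire Fact from \cite{BCI}, deferring this half is no worse than what the paper does; just be aware that the half you defer is where essentially all of the mathematical weight sits, so your write-up should present itself as a reduction of Fact \ref{mainfact} to the results of \cite{BCI} rather than as an independent proof.
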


\section{The main result}

\begin{prop}\label{V3}
Suppose that $\m$ and $\n$ are nonseparable with $\m\equiv_3^{EF}\n$ and $\m$ has $\V$.  Then $\n$ has $\V$.
\end{prop}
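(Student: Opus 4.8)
The plan is to have Player I run the three alternating quantifiers in the definition of property $\V$ one per round, using a winning strategy for Player II in $\G(\m,\n,3)$ to move the relevant data back and forth between $\m$ and $\n$. Two elementary remarks will be used throughout. First, a separable subalgebra of a von Neumann algebra is precisely the subalgebra generated by the (countably many) coordinates of some countable tuple, so we pass freely between separable subalgebras and countable tuples. Second, if Player II follows a winning strategy for $\G(\m,\n,3)$, then after any $j\leq 3$ rounds the substructures generated by the $j$ tuples played on the $\m$-side and the $j$ tuples played on the $\n$-side are $*$-isomorphic via the map matching the tuples in the order they were played: extend the partial play arbitrarily to a full play consistent with the strategy and restrict the isomorphism provided by the win. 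Such an isomorphism is a $*$-algebra isomorphism fixing $0$ and $1$, so it transports, in either direction, relations of the form ``$[x,y]=0$'' and ``$w$ is a unitary''.

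Fix a countable tuple $\vec a$ from $\m$ whose generated subalgebra witnesses $\V$ for $\m$; explicitly, for every separable $B\subseteq C(\vec a)$ and every separable $C\subseteq\m$ there is $u\in U(\m)$ with $uBu^*\subseteq C'\cap\m$. In round $1$, let Player I play $\vec a$ in $\m$ and let $\vec p$ be Player II's response in $\n$; I claim the subalgebra of $\n$ generated by $\vec p$ witnesses $\V$ for $\n$. So fix arbitrary separable $B\subseteq C(\vec p)$ and separable $C\subseteq\n$, and choose countable tuples $\vec b$, $\vec c$ generating $B$, $C$. In round $2$, let Player I play the single countable tuple $\vec b\concat\vec c$ in $\n$, and let $\vec q\concat\vec r$ (with the same block sizes) be Player II's response in $\m$. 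Since $B\subseteq C(\vec p)$, every coordinate of $\vec b$ commutes with every coordinate of $\vec p$; transporting these relations back along the round-$2$ isomorphism, every coordinate of $\vec q$ commutes with every coordinate of $\vec a$, so $\vec q\in C(\vec a)$. Hence the subalgebra $B_0\subseteq\m$ generated by $\vec q$ is a separable subalgebra of $C(\vec a)$, while the subalgebra $C_0\subseteq\m$ generated by $\vec r$ is separable.

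Applying the witnessing property of $\vec a$ in $\m$ to $B_0$ and $C_0$ yields $u\in U(\m)$ with $uB_0u^*\subseteq C_0'\cap\m$; in particular $u q_i u^*$ commutes with $r_j$ in $\m$ for all $i,j$, where $q_i$, $r_j$ list the coordinates of $\vec q$, $\vec r$. In round $3$, let Player I play $u$ in $\m$ and let $v$ be Player II's response in $\n$, and let $\phi$ be the $*$-isomorphism from the subalgebra of $\m$ generated by $\{\vec a,\vec q\concat\vec r,u\}$ onto the subalgebra of $\n$ generated by $\{\vec p,\vec b\concat\vec c,v\}$ provided by the win, so $\phi(u)=v$, $\phi(q_i)=b_i$, $\phi(r_j)=c_j$. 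Then $v\in U(\n)$, and applying $\phi$ to the identities $u q_i u^* r_j-r_j u q_i u^*=0$ gives $v b_i v^* c_j-c_j v b_i v^*=0$ in $\n$; thus each $v b_i v^*$ commutes with every $c_j$, so lies in $C(\vec c)=C'\cap\n$. Since conjugation by the unitary $v$ is a normal $*$-automorphism of $\n$, the algebra $vBv^*$ is the von Neumann algebra generated by the $v b_i v^*$, all of which lie in the von Neumann algebra $C'\cap\n$; therefore $vBv^*\subseteq C'\cap\n$. As $B$ and $C$ were arbitrary, the subalgebra generated by $\vec p$ witnesses $\V$ for $\n$, so $\n$ has $\V$.

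The one point needing care, and the reason the bound is $3$ rather than $4$, is that the two separable subalgebras $B$ and $C$ appearing in the definition of $\V$ must be carried back to $\m$ within a single move; this is legitimate because a move in this variant of the game may be a countably infinite tuple, so the concatenation $\vec b\concat\vec c$ is an admissible play. Everything else is routine: unitarity, commutation, and equality with the constant $0$ are preserved by the $*$-isomorphisms a winning strategy hands us at each finite stage, and passage to the generated von Neumann algebra commutes both with taking relative commutants and with conjugating by a fixed unitary.
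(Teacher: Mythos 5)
Your proof is correct and follows essentially the same route as the paper's: play the countable tuple generating the witnessing algebra in round one, play (dense generating tuples for) $B$ and $C$ together as a single countable tuple in round two, apply property $\V$ in $\m$ to obtain the unitary, and play it in round three, transporting unitarity and the commutation relations through the isomorphism furnished by Player II's winning strategy. The only differences are matters of explicitness (e.g., your remark about restricting the strategy to partial plays and the concatenation trick for staying within three rounds), which the paper leaves implicit.
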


\begin{proof}
Let $A\subseteq \m$ witness that $\m$ has $\V$.  Let $\vec a$ enumerate a countable dense subset of $A$ and let I play $\vec a$.  II then plays $\vec{a_1}\in \n$.  Let $A_1$ denote the subalgebra of $\n$ generated by $\vec{a_1}$.  We claim that $A_1$ witnesses that $\n$ has $\V$.  Towards this end, take separable $B_1\subseteq A_1'\cap \n$ and $C_1\subseteq \n$.  Let $\vec{b_1}$ and $\vec{c_1}$ enumerate countable dense subsets of $B_1$ and $C_1$ respectively.  I then plays $\vec{b_1}$ and $\vec{c_1}$. II then responds with $\vec b,\vec c\in \m$.  Since II wins, it follows that $B\subseteq A'\cap \m$, so there is $u\in U(M)$ such that $uBu^*\subseteq C'\cap \m$.  I finally plays $u$ and II responds with $u_1\in \n$.  It remains to observe that $u_1\in U(\n)$ and $u_1B_1u_1^*\subseteq C_1'\cap \n$.
\end{proof}


\begin{thm}\label{main}
Suppose that $\balpha\in 2^{k+1}$ with $\balpha(k)=1$.  Further suppose that $\Gamma$ is any countable group and that $\m$ is a generalized McDuff ultraproduct corresponding to $\balpha$ and $\Gamma$.  Finally suppose that $\m\equiv_{5k+3}^{EF} \n$.  Then $\n$ has property $\V$ at depth $k$.
\end{thm}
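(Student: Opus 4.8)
The plan is to argue by induction on $k$, the inductive step being an analysis of a winning strategy for II in $\G(\m,\n,5k+3)$ in which the structural Facts \ref{key.fact} are used to realize, inside $\m$, the recursive configuration witnessing $\V$ at depth $k$, with the innermost algebra a generalized McDuff ultraproduct corresponding to $\balpha^\#$. For the base case $k=0$ one has $\balpha=(1)$, so $\m$ has the form $\prod_\u L(T_1(\Gamma))^{\otimes t_s}$ and hence has property $\V$ by Fact \ref{hasV}; since $\m$ is nonseparable and $5\cdot 0+3=3\geq 2$, the lemma on preservation of nonseparability gives that $\n$ is nonseparable, and Proposition \ref{V3} then gives that $\n$ has $\V$, that is, $\V$ at depth $0$. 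For $k\geq 1$ I will assume the theorem for $k-1$, for every countable group and every generalized McDuff ultraproduct; note $\balpha^\#\in 2^{k}$ with $\balpha^\#(k-1)=\balpha(k)=1$, so (taking $\n'=\m'$) the inductive hypothesis says in particular that every generalized McDuff ultraproduct corresponding to $\balpha^\#$ has $\V$ at depth $k-1$, and more generally that this property is carried along $\equiv^{EF}_{5(k-1)+3}=\equiv^{EF}_{5k-2}$.

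Now fix a winning strategy for II in $\G(\m,\n,5k+3)$ and a tuple $\vec a_1$ from $\n$; the goal is to exhibit the data witnessing $\Phi(\vec a_1;k)$. Have I play $\vec a_1$ and II respond with $\vec a_0\in\m$. Using Fact \ref{key.fact}(3) applied to $W^*(\vec a_0)$ together with Fact \ref{key.fact}(2), choose a pair of unitaries $\vec u_0$ with $C(\vec u_0)=\prod_\u P_{\balpha,m_s}^{\otimes t_s}\subseteq C(\vec a_0)$ a special subalgebra, and have I play a countable tuple $\vec b_0$ containing $\vec a_0$, $\vec u_0$, and a witness $w_0\in C(\vec a_0)\setminus C(\vec u_0)$; II responds $\vec b_1$. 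Since $\vec a_0\subseteq\vec b_0$, the substructure isomorphism given by the strategy yields $W^*(\vec a_1)\subseteq W^*(\vec b_1)$, hence $C(\vec b_1)\subseteq C(\vec a_1)$, while $w_0$ witnesses $\vec b_1\not\leq\vec a_1$; so $\vec a_1<\vec b_1$, and this $\vec b_1$ will be the witness for $\Phi(\vec a_1;k)$. Given an arbitrary $\vec c_1>\vec b_1$, repeat the pattern: have I play $\vec c_1$ augmented by a witness $y_1\in C(\vec b_1)\setminus C(\vec c_1)$, with II responding by $\vec c_0$ augmented by $y_0$; one further round transports the universal inclusion $C(\vec c_1)\subseteq C(\vec b_1)$ back to $C(\vec c_0)\subseteq C(\vec b_0)$ (were there $z_0\in C(\vec c_0)\setminus C(\vec b_0)$, having I play it would produce $z_1\in C(\vec c_1)\setminus C(\vec b_1)$, contradicting $\vec c_1>\vec b_1$), so $\vec b_0<\vec c_0$. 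Now apply Fact \ref{key.fact}(3) to $W^*(\vec c_0)$ and Fact \ref{key.fact}(2) to get a pair of unitaries $\vec w$ with $C(\vec w)=\prod_\u P_{\balpha,m''_s}^{\otimes t_s}\subseteq C(\vec c_0)$ and $m''_s>m_s$; then by Fact \ref{key.fact}(1), $Q:=C(\vec w)'\cap C(\vec u_0)$ is a generalized McDuff ultraproduct corresponding to $\balpha^\#$ and $\Gamma$. Have I play a tuple $\vec d_0$ containing $\vec c_0$, $\vec w$, and a witness $z_0'\in C(\vec c_0)\setminus C(\vec w)$, with II responding $\vec d_1$; as for $\vec b_1$ one gets $\vec c_1<\vec d_1$. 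A direct computation with the special subalgebras gives $C(\vec c_0)'\cap C(\vec b_0)\subseteq Q\subseteq C(\vec d_0)'\cap C(\vec a_0)$ in $\m$, and --- since $Q=C(\vec w)'\cap C(\vec u_0)$ with $\vec w\subseteq\vec d_0$ and $\vec u_0\subseteq\vec b_0$ --- these inclusions transport to $C(\vec c_1)'\cap C(\vec b_1)\subseteq Q_1\subseteq C(\vec d_1)'\cap C(\vec a_1)$ in $\n$, where $Q_1:=C(\vec w_1)'\cap C(\vec u_{0,1})$ and $\vec w_1,\vec u_{0,1}$ are the sub-tuples of $\vec d_1,\vec b_1$ matching $\vec w,\vec u_0$.

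It remains to see that $Q_1$ has $\V$ at depth $k-1$. The $5k-2$ rounds of $\G(\m,\n,5k+3)$ not yet consumed, restricted to moves inside $Q$ on the $\m$-side and inside $Q_1$ on the $\n$-side, form a winning strategy for II in $\G(Q,Q_1,5k-2)$: an element of $Q$ is exactly an element commuting with the played pair $\vec u_0$ and with the special subalgebra $C(\vec w)$, and its image under the strategy's substructure isomorphism commutes with $\vec u_{0,1}$ and with $C(\vec w_1)$, and symmetrically in the other direction. Since $Q$ is a generalized McDuff ultraproduct corresponding to $\balpha^\#$ with $\balpha^\#(k-1)=1$, and $5k-2=5(k-1)+3$, the inductive hypothesis yields that $Q_1$ has $\V$ at depth $k-1$. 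This establishes $\Phi(\vec a_1,\vec b_1,\vec c_1,\vec d_1;k)$, hence $\Phi(\vec a_1;k)$, and, $\vec a_1$ being arbitrary, that $\n$ has $\V$ at depth $k$.

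The main obstacle is the accounting of rounds: the whole argument must be compressed into $5k+3$ of them, i.e.\ exactly five per level of the recursion on $k$ above a base of three, with nothing to spare. This is precisely what forces one to work with special subalgebras: being of the form $C(\vec v)$ for a pair of unitaries $\vec v$ by Fact \ref{key.fact}(2), they allow the relative commutants occurring in $\Phi$ to be coordinatized by tuples already among the moves made, which in turn is what makes conditions of the shape ``$C(\vec c)'\cap C(\vec b)\subseteq C(\vec d)'\cap C(\vec a)$'' --- and the symmetry of the residual sub-game $\G(Q,Q_1,5k-2)$ --- transport between $\m$ and $\n$ within budget. Carefully verifying each such transport, and confirming that the stated number of rounds really suffices (bundling the strictness witnesses into the played tuples, with a single extra round per level devoted to a ``wrong-direction'' universal inclusion), is the heart of the proof.
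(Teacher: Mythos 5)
Your proposal is correct and follows essentially the same route as the paper: induction on $k$ with Fact \ref{hasV} and Proposition \ref{V3} for the base case, then four rounds of the game to pin down special subalgebras on the $\m$-side and their images on the $\n$-side, Fact \ref{key.fact}(1) to identify the relative commutant as a generalized McDuff ultraproduct for $\balpha^\#$, and the remaining $5k-2$ rounds (plus one hypothetical round) as a sub-game to which the inductive hypothesis applies. Your bundling of the strictness witnesses and prior tuples into the played moves is only a cosmetic reorganization of the paper's Claim 1/Claim 2 bookkeeping.
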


\begin{proof}
We proceed by induction on $k$.  Fact \ref{hasV} and the previous proposition establishes the case $k=0$.  So suppose that $k>0$ and the result holds for all smaller $k$.  
Choose a  $\vec b_0$ from $\n$ and we would like to show that $\Phi(\vec b_0;k)$ holds.  We obtain this by having player I play cooperatively in $\G(\m,\n,5k+3)$. View $\vec{b_0}$ as the first play for player I;   II responds with $\vec {a_0}$ from $\m$ according to her winning strategy. Let $P$ be a special subalgebra of $\m$ such that $P\subset C(\vec{a_0})$.  At the next round, I plays $\vec {a_1}$ from $\m$ such that $P=C(\vec{a_1})$ (so $\vec{a_0}<\vec{a_1}$) and II responds with $\vec{b_1}$ from $\n$.


\noindent \textbf{Claim 1}  $\vec{b_0}<\vec{b_1}$.  

\noindent \textbf{Proof of Claim 1:}  We show that otherwise, player I could win the game. First suppose that there is $y\in C(\vec{b_1})\setminus C(\vec{b_0})$; since $5k+3\geq 3$, we can have I play $y$ and  II responds with $x \in \m$ according to her winning strategy. We have that $x\in C(\vec{a_1})\setminus C(\vec {a_0})$, a contradiction.  This shows that $\vec{b_0}\leq \vec{b_1}$.  Now suppose that $z\in C(\vec{a_0})\setminus C(\vec{a_1})$ and have I play $z$, II responding with $w\in \n$; since II wins, it follows that $w\in C(\vec{b_0})\setminus C(\vec{b_1})$, whence $\vec{b_0}<\vec{b_1}$.

%

Now we would like to show that $\Phi(\vec{b_0},\vec{b_1};k)$ holds. Choose any $\vec{b_2}>\vec{b_1}$ and we will show $\Phi(\vec{b_0},\vec{b_1},\vec{b_2};k)$ holds.  Agreeably I plays $\vec{b_2}$, II responding with $\vec{a_2}$ from $\m$.  Since $5k+3\geq 4$, the proof of Claim 1 shows that $\vec{a_2}>\vec{a_1}$.  Now choose a special subalgebra $Q$ such that $Q \subset C(\vec{a_2})$ and $Q = C(\vec{a_3})$.  Player I now plays $\vec{a_3}$ and II responds with $\vec{b_3} \in \n$.  Since $5k+3 \geq 5$, repeating Claim 1 shows that $\vec{b_3} > \vec{b_2}$.  To finish, we show that $\Phi(\vec{b_0},\vec{b_1},\vec{b_2},\vec{b_3};k)$ holds.



Set $\m_1:=C(\vec{a_3})'\cap C(\vec{a_1})$ and $\n_1:=C(\vec{b_3})'\cap C(\vec{b_1})$.  Note that $\m_1$ is a generalized McDuff ultraproduct corresponding to $\balpha^\#\in 2^k$ and $\Gamma$ and that $\balpha^\#(k-1)=1$.

\noindent \textbf{Claim 2:}  $\m_1\equiv_{5k-2}^{EF} \n_1$.  

\noindent \textbf{Proof of Claim 2:}  We view any round $p$ in $\G(\m_1,\n_1,5k-2)$ as round $p+4$ in $\G(\m,\n,5k+3)$ where the first four rounds are played out as above II plays according to the winning strategy for that game.  A priori II's moves come from $\m$ or $\n$ but if they do not land in $\m_1$ or $\n_1$ then I can win the game in 1 more step since $p+4+1\leq 5k-2+5 = 5k+3$ and this would be a contradiction.

Since $5k-2=5(k-1)+3$, by induction we see that $\n_1$ has $\V$ at depth $k-1$.  Since we have $C(\vec{b_2})' \cap C(\vec{b_1}) \subset \n_1 \subset C(\vec{b_3})' \cap C(\vec{b_0})$, it follows that $\Phi(\vec{b_0},\vec{b_1},\vec{b_2},\vec{b_3};k)$.
\end{proof}

\begin{cor}
Suppose that $\balpha,\bbeta\in 2^\omega$ and $k$ are such that $\balpha|k=\bbeta|k$, $\balpha(k)=1$, $\bbeta(k)=0$.  Then $\m_{\balpha}\not\equiv_{5k+3}\m_{\bbeta}$.
\end{cor}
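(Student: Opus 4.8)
The plan is to derive a contradiction from the assumption $\m_{\balpha}\equiv_{5k+3}\m_{\bbeta}$ by passing to ultrapowers and invoking Theorem \ref{main} together with Fact \ref{mainfact}. Fix a nonprincipal ultrafilter $\u$ on $\N$. A structure elementarily embeds into any of its ultrapowers, so the interpretation of a sentence is unchanged upon passing to an ultrapower; hence $\m_{\balpha}\equiv_{5k+3}\m_{\bbeta}$ gives $\m_{\balpha}^\u\equiv_{5k+3}\m_{\bbeta}^\u$. Both of these ultrapowers are $\aleph_1$-saturated, so the lemma identifying $\equiv_j$ with $\equiv_j^{EF}$ for $\aleph_1$-saturated structures yields $\m_{\balpha}^\u\equiv_{5k+3}^{EF}\m_{\bbeta}^\u$.

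To apply Theorem \ref{main} I must realize $\m_{\balpha}^\u$ as a generalized McDuff ultraproduct corresponding to a \emph{finite} sequence. Define $\gamma\in 2^\omega$ by $\gamma(i):=\balpha(k+1+i)$ and put $\Gamma':=K_{\gamma}(\mathbb{F}_2)$, a countable group. The operations $T_0,T_1$, and hence $K_{\boldsymbol\delta}$ for finite $\boldsymbol\delta$, are given by presentations that are compatible with directed unions, so they commute with inductive limits of groups; peeling off the first $k+1$ layers of the tower defining $K_{\balpha}(\mathbb{F}_2)$ therefore gives $K_{\balpha}(\mathbb{F}_2)\cong K_{\balpha|(k+1)}(\Gamma')$, whence $\m_{\balpha}=L(K_{\balpha}(\mathbb{F}_2))=L(K_{\balpha|(k+1)}(\Gamma'))=\m_{\balpha|(k+1)}(\Gamma')$. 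Thus $\m_{\balpha}^\u$ is a generalized McDuff ultraproduct corresponding to $\balpha|(k+1)\in 2^{k+1}$ and the countable group $\Gamma'$, and $(\balpha|(k+1))(k)=\balpha(k)=1$. This identification is the only point of the argument requiring genuine care; everything else is formal bookkeeping (invariance of sentence values under ultrapowers, $\aleph_1$-saturation of ultrapowers over $\N$, and the passage between $\equiv_{5k+3}$ and $\equiv_{5k+3}^{EF}$).

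Finally, apply Theorem \ref{main} with $\m:=\m_{\balpha}^\u$, $\n:=\m_{\bbeta}^\u$, the sequence $\balpha|(k+1)$, and the group $\Gamma'$: since $\m_{\balpha}^\u\equiv_{5k+3}^{EF}\m_{\bbeta}^\u$, we conclude that $\m_{\bbeta}^\u$ has property $\V$ at depth $k$. But Fact \ref{mainfact} says that $\m_{\bbeta}^\u$ has property $\V$ at depth $k$ if and only if $\bbeta(k)=1$, contradicting $\bbeta(k)=0$. Therefore $\m_{\balpha}\not\equiv_{5k+3}\m_{\bbeta}$. (Note that the hypothesis $\balpha|k=\bbeta|k$ is not actually used; only $\balpha(k)=1$ and $\bbeta(k)=0$ are needed. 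These hold, after possibly interchanging $\balpha$ and $\bbeta$, whenever $k$ is the least index at which the two sequences differ, which is the situation of interest and the source of the main theorem stated in the introduction.)
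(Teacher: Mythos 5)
Your proposal is correct and follows essentially the same route as the paper: pass to ultrapowers of both factors, use $\aleph_1$-saturation to upgrade $\equiv_{5k+3}$ to $\equiv_{5k+3}^{EF}$, write $\m_{\balpha}=L(K_{\balpha|(k+1)}(\Gamma'))$ with $\Gamma'$ the tail group so that $\m_{\balpha}^\u$ is a generalized McDuff ultraproduct for $\balpha|(k+1)$, and then apply Theorem \ref{main} to contradict Fact \ref{mainfact}. The only difference is that you spell out the tail identification and the bookkeeping that the paper leaves implicit (and your observation that $\balpha|k=\bbeta|k$ is not actually used matches the paper's argument).
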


\begin{proof}
Fix $\u\in \beta\N\setminus \N$.  If $\m_{\balpha}\equiv_{5k+3}\m_{\bbeta}$, then $\m_{\balpha}^\u\equiv_{5k+3}^{EF}\m_{\bbeta}^\u$.  Since $\m_{\balpha}=L(K_{\balpha|(k+1)}(\Gamma))$ for some group $\Gamma$, the previous theorem implies that $\m_{\bbeta}^\u$ has $\V$ at depth $k$, contradicting Fact \ref{mainfact}.
\end{proof}

\section{Miscellanea}

\subsection{Distinguishing $\tilde{V}$ with a sentence}

As mentioned in the introduction, it would be interesting to find concrete sentences that are actually distinguishing the McDuff examples.  In this subsection, we show how we can find a set of sentences to distinguish $\m_{\balpha}$ from $\m_{\bbeta}$ when $\balpha(0)=1$ and $\bbeta(0)=0$.

Suppose that $\m$ is a separable McDuff II$_1$ factor for which $\m^\u$ has $\V$ as witnessed by separable $A\subseteq \m^\u$.  Since any separable subalgebra of $\m^\u$ containing $A$ also witnesses that $\m^\u$ has $\V$, by considering a separable elementary substructure of $\m^\u$ containing $A$, we may assume that $A$ is a separable McDuff II$_1$ factor, whence singly generated, say by $a\in A$.  Fix $n\in \N$ and let $\theta_n(w)$ be the meta-statement 
$$\forall \vec x,\vec y\left(\max_{1\leq i \leq n}\|[w,x_i]\|_2=0\rightarrow \inf_{u\in U}\max_{1\leq i,j \leq n}\|[ux_iu^*,y_j]\|_2=0\right).$$  Now $\theta_n(w)$ is not an official statement of continuous logic, but \cite[Proposition 7.14]{bbhu} together with the fact that $(\m^\u,a)$ is $\aleph_1$-saturated and $\theta_n(a)$ holds in $(\m^\u,a)$ implies that there are continuous functions $\gamma_n:[0,1]\to [0,1]$ with $\gamma_n(0)=0$ such that $\psi_n(a)^{(\m^\u,a)}=0$ for each $n$, where $\psi_n(w)$ is the formula
$$\sup_{\vec x,\vec y}\left(\left(\inf_u\max_{1\leq i,j\leq n}\|ux_iu^*,y_j]\|_2\right)\dotminus \gamma_n\left(\max_{1\leq i\leq n}\|[w,x_i]\|_2\right)\right).$$

\begin{prop}
Suppose that $\balpha,\bbeta\in 2^\omega$ are such that $\balpha(0)=1$ and $\bbeta(0)=0$.  Then there are $\gamma_n$, $\psi_n$ as above such that:
\begin{enumerate}
\item For each $n\geq 1$, $(\inf_w \psi_n(w))^{\m_{\balpha}}=0$.
\item There is $n\geq 1$ such that $(\inf_w \psi_n(w))^{\m_{\bbeta}}\not=0$.
\end{enumerate}
\end{prop}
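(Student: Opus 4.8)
The plan is to run the construction preceding the proposition with $\m:=\m_\balpha$. This is legitimate: $\m_\balpha$ is a separable McDuff II$_1$ factor, and since $\balpha(0)=1$ we may write $\m_\balpha\cong L(T_1(\Gamma))$ for a suitable countable group $\Gamma$, so $\m_\balpha^\u$ has property $\V$ by Fact~\ref{hasV} (taking all $t_s=1$); equivalently, one may quote Fact~\ref{mainfact} at depth $0$. The construction then produces continuous $\gamma_n:[0,1]\to[0,1]$ with $\gamma_n(0)=0$, the associated formulas $\psi_n$, and a single element $a$ of a separable, singly generated subalgebra $A\subseteq\m_\balpha^\u$ witnessing $\V$, such that $\psi_n(a)^{(\m_\balpha^\u,a)}=0$ for every $n\ge 1$. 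I claim these $\gamma_n$ and $\psi_n$ work.

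Part (1) is immediate: $\inf_w\psi_n(w)$ is an honest sentence of continuous logic, so $(\inf_w\psi_n(w))^{\m_\balpha}=(\inf_w\psi_n(w))^{\m_\balpha^\u}$, and the right-hand side is $\le\psi_n(a)^{\m_\balpha^\u}=0$; since $\psi_n\ge 0$ (a supremum of truncated differences), it equals $0$.

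For part (2) I argue by contradiction: suppose $(\inf_w\psi_n(w))^{\m_\bbeta}=0$ for all $n$, hence also $(\inf_w\psi_n(w))^{\m_\bbeta^\u}=0$ for all $n$ since $\m_\bbeta\preceq\m_\bbeta^\u$. Using that $\m_\bbeta^\u$ is $\aleph_1$-saturated, for each $n$ choose $a_n\in\m_\bbeta^\u$ with $\psi_n(a_n)=0$, and let $A$ be the von Neumann subalgebra of $\m_\bbeta^\u$ generated by $\{a_n:n\ge 1\}$; it is separable. I will show $A$ witnesses that $\m_\bbeta^\u$ has $\V$, contradicting Fact~\ref{mainfact} at depth $0$ since $\bbeta(0)=0$. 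So let separable $B\subseteq A'\cap\m_\bbeta^\u$ and separable $C\subseteq\m_\bbeta^\u$ be given, and fix sequences $(x_i)_{i\ge 1}$ and $(y_j)_{j\ge 1}$ enumerating dense subsets of $B$ and $C$. For each $N$, the tuple $(x_1,\dots,x_N)$ commutes with $a_N\in A$, so $\max_{1\le i\le N}\|[a_N,x_i]\|_2=0$; plugging $(x_1,\dots,x_N)$ and $(y_1,\dots,y_N)$ into $\psi_N(a_N)=0$ and using $\gamma_N(0)=0$ yields $\inf_{u\in U(\m_\bbeta^\u)}\max_{1\le i,j\le N}\|[ux_iu^*,y_j]\|_2=0$. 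Consequently the type in the variable $u$ over the countable parameter set $\{x_i\}\cup\{y_j\}$ asserting that $u$ is a unitary with $[ux_iu^*,y_j]=0$ for all $i,j$ is finitely satisfiable in $\m_\bbeta^\u$, hence realized by some unitary $u$ by $\aleph_1$-saturation; density together with the $\|\cdot\|_2$-isometry $z\mapsto uzu^*$ then gives $uBu^*\subseteq C'\cap\m_\bbeta^\u$, as desired.

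The one step needing genuine care is the realization that one must \emph{not} try to make all the $\psi_n$ vanish at a single element of $\m_\bbeta^\u$: the hypotheses of part (2) give $\inf_w\psi_n(w)=0$ for each individual $n$ but say nothing about the genuinely different sentences $\inf_w\max\{\psi_{n_1}(w),\dots,\psi_{n_k}(w)\}$, so the type $\{\psi_n(w)=0:n\ge 1\}$ need not be finitely satisfiable in $\m_\bbeta^\u$. The remedy is exactly the one above — produce one $a_n$ per $n$, package them into a single separable $A$, and invoke saturation a second time to produce the conjugating unitary. Everything else (transfer of sentence values between $\m$ and $\m^\u$, the identification of $A'\cap\m$ with the joint commutant of a $\|\cdot\|_2$-dense sequence, and the McDuff-to-singly-generated reduction inside the construction) is routine and already recorded in the paper.
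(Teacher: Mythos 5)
Your proof is correct and follows the paper's argument in all essentials: run the discussion preceding the proposition with $\m:=\m_{\balpha}^\u$ (which has $\V$ since $\balpha(0)=1$, by Fact \ref{hasV}, equivalently Fact \ref{mainfact} at depth $0$), get (1) by elementarity of the diagonal embedding, and for (2) assume all the sentences vanish on $\m_{\bbeta}$, produce a separable witness inside $\n:=\m_{\bbeta}^\u$, and contradict Fact \ref{mainfact} since $\bbeta(0)=0$. The one place you deviate is worth noting. The paper realizes all the conditions $\psi_n(w)=0$ at a \emph{single} element $a_1\in\n$, justifying finite satisfiability of that type by the parenthetical remark that ``the $\psi_n$'s get successively stronger.'' By a padding argument, $\psi_{n+1}(w)=0$ implies $\psi_n(w)=0$ precisely when $\gamma_{n+1}\leq\gamma_n$ pointwise, and since the construction preceding the proposition produces each $\gamma_n$ by a separate saturation argument, that monotonicity is not automatic (one is free to enlarge each $\gamma_n$, but arranging a pointwise nonincreasing family amounts to a modulus uniform in $n$, which the construction as stated does not obviously provide). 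Your workaround --- one witness $a_n$ per $n$, with $A$ the separable subalgebra generated by all of them, followed by a second application of countable saturation to get the conjugating unitary --- costs nothing, since every element of $B\subseteq A'\cap\n$ still commutes with each $a_N$, so $\psi_N(a_N)=0$ together with $\gamma_N(0)=0$ yields exactly the finite satisfiability of $\Omega(u)$ that the paper uses. In short: same strategy as the paper, with a small but genuine tightening at the step where the paper is glib; your cautionary remark about not forcing all $\psi_n$ to vanish at one point is well taken, and your version of the argument is complete as written.
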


\begin{proof}
Let $\m:=\m_{\balpha}^\u$, $\n:=\m_{\bbeta}^\u$.  Then $\m$ has $\V$, whence the discussion preceding the current proposition holds and we have $\gamma_n$, $\psi_n$ satisfying (1).  Suppose, towards a contradiction, that (2) fails, namely that $(\inf_w \psi_n(w))^{\m_{\bbeta}}=0$ for all $n$.  We claim that $\n$ has $\V$, a contradiction.  By saturation (together with the fact that the $\psi_n$'s get successively stronger), there is $a_1\in \n$ such that $\psi_n(a_1)=0$ for all $n$.  Let $A_1$ be the subalgebra of $\n$ generated by $a_1$.  We claim that $A_1$ witnesses that $\n$ has $\V$.  Towards this end, fix separable $B\subseteq A_1'\cap \n$ and separable $C\subseteq \n$.  Let $\vec{b}$ and $\vec c$ enumerate countable dense subsets of $B$ and $C$ respectively.  Set
$$\Omega(u):=\{u\in U\}\cup\{\|[ub_iu^*,c_j]\|_2=0 \ : \ i,j\in \n\}.$$
By choice of $a_1$, $\Omega(u)$ is finitely satisfiable in $\n$, whence satisfiable in $\n$; if $u$ satisfies $\Omega$, then $uBu^*\subseteq C'\cap \n$, yielding the desired contradiction.
\end{proof}   

Notice that each $\inf_w\psi_n(w)$ has depth $3$ which agrees with the 3 appearing in Proposition \ref{V3}.  Also note that the above discussion goes through with $\m_{\balpha}^\u$ replaced with any generalized McDuff ultraproduct corresponding to $\balpha$ and any countable group $\Gamma$ and likewise for $\m_{\bbeta}^\u$.

\subsection{Good unitaries and definable sets}
We would like to draw the reader's attention to some of the underlying model theory in \cite{BCI} and recast Theorem \ref{main}.  We highlight and give a name to the following concept that played a critical role in \cite{BCI}.
\begin{df}
We say that a pair of unitaries $u,v $ in a II$_1$ factor $\m$ are {\em good unitaries} if $C(u,v)$ is a (2,100)-residual subalgebra of $\m$ (in the terminology of \cite{BCI}) with respect to the unitaries $u$ and $v$, that is, for all $\zeta \in \m$,
\[ 
\inf_{\eta\in C(u,v)}\|\zeta -\eta\|_2 \leq 100 (\|[\zeta,u]\|_2^2 + \|[\zeta,v]\|_2^2). 
\]
 We will call $C(u,v)$ a {\em good subalgebra} with respect to $u$ and $v$.
\end{df}

If $u$ and $v$ are good unitaries, then $C(u,v)$ is a $\{u,v\}$-definable set, which follows immediately from \cite[Proposition 9.19]{bbhu}.  Moreover, we claim that if $u_1,v_1$ are another pair of good unitaries for which $C(u_1,v_1)\subseteq C(u,v)$, then $C(u_1,v_1)'\cap C(u,v)$ is $\{u,v,u_1,v_1\}$-definable.  To see this, we first recall the following fact, due to Sorin Popa and communicated to us by David Sherman.

\begin{fact}
Suppose that $\m$ is a tracial von Neumann algebra with subalgebra $\n$.  Let $E:\m\to \n'\cap \m$ denote the conditional expectation map.  Then for any $x\in \m$, we have
$$\|E(x)-x\|_2\leq \sup_{y\in \n_{\leq 1}}\|[x,y]\|_2.$$
\end{fact}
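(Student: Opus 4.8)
The plan is to prove the pointwise estimate $\|E(x) - x\|_2 \leq \sup_{y \in \n_{\leq 1}} \|[x,y]\|_2$ by a convexity/barycenter argument applied to the unitary orbit of $x$ under $\n$. First I would reduce to the case where $x = x^*$ is self-adjoint, or better, just work directly with the orbit $K := \overline{\operatorname{conv}}^{\,\|\cdot\|_2}\{ u x u^* : u \in U(\n)\}$, the $\|\cdot\|_2$-closed convex hull of the unitary conjugates of $x$ inside $L^2(\m,\tr)$. This set $K$ is convex, bounded (each $uxu^*$ has the same operator norm as $x$), and closed in the Hilbert space $L^2(\m)$, hence has a unique element $z$ of minimal $\|\cdot\|_2$-norm. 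Because $K$ is invariant under the isometries $\xi \mapsto u\xi u^*$ for $u \in U(\n)$, uniqueness of the minimizer forces $u z u^* = z$ for all $u \in U(\n)$, i.e. $z \in \n' \cap \m$ (one checks $z$ really lies in $\m$, not merely $L^2$, since $\|z\|_\infty \leq \|x\|_\infty$ by the fact that $K$ sits in the operator-norm ball). A standard fact then identifies this minimizer $z$ with $E(x)$, the conditional expectation onto $\n'\cap\m$: indeed $E(x)$ lies in $K$ (it is an $L^2$-limit of averages $\frac{1}{n}\sum u_i x u_i^*$ over suitable unitaries, by the Dixmier-type averaging characterizing $E$ onto the relative commutant) and $E$ is an $L^2$-orthogonal projection, so $x - E(x) \perp \n'\cap\m \ni z - E(x)$, giving $\|z\|_2^2 = \|E(x)\|_2^2 + \|z - E(x)\|_2^2 \geq \|E(x)\|_2^2$; combined with minimality of $z$ this yields $z = E(x)$.

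Granting $E(x) \in K$, the estimate is now a short computation: $E(x)$ is an $L^2$-limit of convex combinations $\sum_k \lambda_k u_k x u_k^*$ with $u_k \in U(\n)$, $\lambda_k \geq 0$, $\sum_k \lambda_k = 1$. For each such combination,
\[
\Big\| \Big(\sum_k \lambda_k u_k x u_k^*\Big) - x \Big\|_2 = \Big\| \sum_k \lambda_k (u_k x u_k^* - x) \Big\|_2 \leq \sum_k \lambda_k \| u_k x u_k^* - x\|_2 = \sum_k \lambda_k \|[u_k, x]\,u_k^*\|_2,
\]
and since $u_k^*$ is a unitary, $\|[u_k,x] u_k^*\|_2 = \|[u_k,x]\|_2 = \|[x,u_k]\|_2 \leq \sup_{y \in \n_{\leq 1}} \|[x,y]\|_2$. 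Thus every convex combination is within $\sup_{y\in\n_{\leq 1}}\|[x,y]\|_2$ of $x$ in $\|\cdot\|_2$, and passing to the $L^2$-limit gives $\|E(x) - x\|_2 \leq \sup_{y \in \n_{\leq 1}} \|[x,y]\|_2$, as desired.

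The main obstacle I anticipate is not any single inequality but rather pinning down cleanly that the $\|\cdot\|_2$-minimal element of $K$ really is the conditional expectation onto $\n'\cap\m$ (and in particular that it lies in $\m$ with controlled operator norm) — this is exactly the content of Dixmier's averaging theorem in the relative-commutant formulation, and one should either cite it or sketch the averaging argument. A secondary technical point is justifying the interchange of the $L^2$-limit with the triangle-inequality estimate, but that is routine since $\xi \mapsto \|\xi - x\|_2$ is $\|\cdot\|_2$-continuous. One could alternatively phrase the whole argument via the unique trace-preserving conditional expectation and a weak-operator compactness argument in the unit ball of $\m$, but the Hilbert-space barycenter approach above is the most economical.
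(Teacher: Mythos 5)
The paper offers no proof of this Fact to compare against: it is quoted as a result of Popa, communicated by Sherman, and used as a black box. So I can only assess your argument on its own terms. Your overall route --- the $\|\cdot\|_2$-closed convex hull $K$ of $\{uxu^*: u\in U(\n)\}$, the unique minimal-norm element $z\in K$, the observation that $z\in\m$ (since $K$ sits in the operator-norm ball, which is $\|\cdot\|_2$-closed) and $z\in\n'\cap\m$ (by uniqueness under the isometries $\operatorname{Ad}(u)$), and the estimate $\|\sum_k\lambda_k u_kxu_k^*-x\|_2\le\sup_{y\in\n_{\le 1}}\|[x,y]\|_2$ --- is the standard argument and is sound. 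However, the step identifying $z$ with $E(x)$ is misjustified as written: from $x-E(x)\perp \n'\cap\m$ and $z-E(x)\in\n'\cap\m$ you only get $\langle x-E(x),\,z-E(x)\rangle=0$, which does not yield $\|z\|_2^2=\|E(x)\|_2^2+\|z-E(x)\|_2^2$ (the cross term there is $\langle E(x),\,z-E(x)\rangle$, and both vectors lie in $\n'\cap\m$, so no orthogonality is available). The clean fix: for $u\in U(\n)$, $\operatorname{Ad}(u)$ is a $\|\cdot\|_2$-isometry of $\m$ fixing $\n'\cap\m$ pointwise (if $a\in\n'\cap\m$ then $uau^*=a$), hence it commutes with the orthogonal projection of $L^2(\m)$ onto $L^2(\n'\cap\m)$, i.e.\ $E(uxu^*)=E(x)$; since $E$ is affine and $\|\cdot\|_2$-continuous, $E(w)=E(x)$ for every $w\in K$, and therefore $z=E(z)=E(x)$. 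This also removes the appeal to Dixmier-type averaging that you flagged as the main obstacle: you get $E(x)\in K$ for free.

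In fact you can shortcut even further and never identify $z$ with $E(x)$: set $\delta:=\sup_{y\in\n_{\le 1}}\|[x,y]\|_2$; each $uxu^*$ satisfies $\|uxu^*-x\|_2=\|[u,x]\|_2\le\delta$, so the orbit, and hence $K$ (the closed ball around $x$ of radius $\delta$ being convex and $\|\cdot\|_2$-closed), lies within $\delta$ of $x$. Since $E$ is the restriction of the orthogonal projection onto $L^2(\n'\cap\m)$, $E(x)$ is the nearest point of that subspace to $x$, so $\|E(x)-x\|_2\le\|z-x\|_2\le\delta$, using only that $z\in K\cap(\n'\cap\m)$, which you established correctly. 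With either repair your proof is complete; the preliminary remark about reducing to $x=x^*$ is unnecessary and can be dropped.
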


Note already that this fact shows $C(u,v)'\cap \m$ is $\{u,v\}$-definable for any pair of good unitaries $u,v$.  In general, intersections of definable subsets of metric structures need not be definable, so to show that $C(u_1,v_1)'\cap C(u,v)$ is definable, we need to do a bit more.

\begin{lemma}
Suppose that $\m$ is a tracial von Neumann algebra with subalgebra $\n$.  Let $E:\m\to \n$ denote the conditional expectation and let $P(x):=d(x,\n)$ for all $x\in \m$.  Then $E$ is an $A$-definable function if and only if $P$ is an $A$-definable predicate.
\end{lemma}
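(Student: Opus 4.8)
The plan is to prove both implications by writing one of the two objects as a simple formula in terms of the other, uniformly in the parameters $A$. The two directions are not symmetric in difficulty: deriving $P$ from $E$ is immediate, while recovering $E$ from $P$ requires the Hilbert-space geometry of the conditional expectation together with the fact that in a tracial von Neumann algebra the $\|\cdot\|_2$-closest point of the subalgebra $\n$ is exactly $E(x)$.

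First I would handle the easy direction: if $E$ is an $A$-definable function, then since $E(x)\in \n$ and $E$ is the orthogonal projection of $x$ onto $\overline{\n}^{\,\|\cdot\|_2}$, we have $P(x)=d(x,\n)=\|x-E(x)\|_2$. Thus $P$ is obtained by composing the (definable) function $x\mapsto (x,E(x))$ with the continuous function $(x,y)\mapsto \|x-y\|_2$, so $P$ is an $A$-definable predicate by the usual closure properties of definable functions and predicates (this is where one cites the standard machinery of \cite{bbhu}, e.g.\ Proposition 9.19 and the discussion of definable functions there).

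For the converse, suppose $P(x)=d(x,\n)$ is an $A$-definable predicate. The key point is that, because $\m$ is tracial, $E(x)$ is the \emph{unique} element of $\n$ achieving the infimum $d(x,\n)$, and moreover this infimum is achieved ``robustly'': if $y\in \n$ with $\|x-y\|_2$ close to $d(x,\n)$, then $y$ is close to $E(x)$. Concretely, for $y\in \n$ one has the Pythagorean identity $\|x-y\|_2^2 = \|x-E(x)\|_2^2 + \|E(x)-y\|_2^2$, so $\|E(x)-y\|_2^2 = \|x-y\|_2^2 - P(x)^2$; hence an approximate minimizer of $\|x-y\|_2$ over $y\in \n$ is automatically an approximate value of $E(x)$, with an explicit modulus. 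This says precisely that the graph of $E$ is cut out by the (zero set of the) definable predicate $(x,y)\mapsto \max\bigl(d(y,\n),\ \|x-y\|_2 \dotminus P(x)\bigr)$ — here $d(y,\n)=P(y)$ is definable by hypothesis — and that this predicate controls the distance to the graph in the sense required by the definition of a definable function (again citing the criterion in \cite{bbhu}). Therefore $E$ is an $A$-definable function.

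The main obstacle is purely a matter of bookkeeping in the converse direction: one must check that the relevant predicate genuinely witnesses $E$ as a \emph{definable function} in the precise sense of continuous logic — i.e.\ that it not only has $E$ as its unique zero for each $x$ but that its value bounds $\|y - E(x)\|_2$ from below by a modulus independent of $x$ — and the Pythagorean computation above supplies exactly this. One should also note that $\n$ being a von Neumann subalgebra (so that $\n$ is $\|\cdot\|_2$-closed and $E$ genuinely exists) is used, but no further structure is needed; in particular the argument does not require $\n$ to be of the form $C(\vec a)$, though that is the intended application.
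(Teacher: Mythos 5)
Your proposal is correct in substance, and the easy direction coincides with the paper's; but in the harder direction you take a genuinely different (and heavier) route. The paper's converse is a one-line computation: for \emph{arbitrary} $x,y\in\m$ (not just $y\in\n$) one has $\|E(x)-y\|_2^2=\|x-y\|_2^2-P(x-y)^2+P(y)^2$, which exhibits $(x,y)\mapsto\|E(x)-y\|_2$ directly as a continuous combination of $\|\cdot\|_2$ and the definable predicate $P$; this is literally the predicate whose definability defines $E$ being an $A$-definable function, so there is nothing left to check. You instead use the Pythagorean identity only for $y\in\n$, observe that the graph of $E$ is the zero set of the definable predicate $\max\bigl(P(y),\ \|x-y\|_2\dotminus P(x)\bigr)$, and verify the Proposition 9.19-type distance control (your estimate does yield a modulus independent of $x$, since $P(x)\le 1$ on the unit ball). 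That route works, but be aware of the one step you pass over quickly: the criterion you cite gives that the \emph{graph} is a definable set, whereas the definition of definable function requires that the predicate $d(E(x),y)$ be definable, so you still must bridge the two — for instance by writing $d(E(x),y)=\inf_{(x',y')\in G}\bigl(\|x-x'\|_2+\|y-y'\|_2\bigr)$ with $G$ the graph, using that quantification over a definable set preserves definability together with the fact that $E$ is $\|\cdot\|_2$-contractive. This is standard but should be stated; the paper's identity, being valid for all $y\in\m$, avoids this bookkeeping entirely, which is what its brevity buys.
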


\begin{proof}
If $E$ is an $A$-definable function, then $P(x):=\|E(x)-x\|_2$ is an $A$-definable predicate.  Conversely, if $P$ is an $A$-definable predicate, then for any $x,y\in \m$, we have $\|E(x)-y\|_2^2=\|x-y\|_2^2-P(x-y)^2+P(y)^2$, whence $E$ is an $A$-definable function.
\end{proof}

\begin{lemma}
Suppose that $\m$ is a tracial von Neumann algebra with subalgebras $\n_1\subseteq \n_2\subseteq \m$.  Furthermore suppose that $\n_1$ and $\n_2$ are $A$-definable subsets of $\m$.  Then $\n_2'\cap \n_1$ is an $A$-definable subset of $\m$.
\end{lemma}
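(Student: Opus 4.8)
The plan is to reduce the statement to the definability of a single conditional expectation and then quote the previous lemma. Write $\p:=\n_2'\cap\n_1$ and let $E_\p\colon\m\to\p$ be the trace-preserving conditional expectation; by the previous lemma it suffices to show that $d(\cdot,\p)$ is an $A$-definable predicate, equivalently that $E_\p$ is an $A$-definable function. First I would collect the ingredients. Since $\n_1,\n_2$ are $A$-definable, the previous lemma makes $E_{\n_1}$ and $E_{\n_2}$ into $A$-definable functions. Moreover $(\n_2)_{\leq 1}$ is an $A$-definable subset of the unit-ball sort: the nearest point of $\n_2$ to a given element $x$ of that sort is $E_{\n_2}(x)$, which has operator norm at most $1$ because conditional expectations are operator-norm contractive, so $d(x,(\n_2)_{\leq 1})=d(x,\n_2)$ for such $x$. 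Hence $R(x):=\sup_{y\in(\n_2)_{\leq 1}}\|[x,y]\|_2$ is an $A$-definable predicate whose zero set is $\n_2'\cap\m$; by the inequality of Popa recalled above $d(x,\n_2'\cap\m)\leq R(x)$, so $\n_2'\cap\m$ is $A$-definable and the previous lemma upgrades $E_{\n_2'\cap\m}$ to an $A$-definable function as well.

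Next I would write down the natural candidate predicate, namely $P(x):=\max\bigl(\|E_{\n_1}(x)-x\|_2,\ \|E_{\n_2'\cap\m}(E_{\n_1}(x))-E_{\n_1}(x)\|_2\bigr)$, which is $A$-definable by the previous step, and note that its zero set is precisely $\n_1\cap(\n_2'\cap\m)=\p$. By the standard criterion for definable sets it now suffices to prove a uniform estimate: for every $\varepsilon>0$ there is $\delta>0$ so that $P(x)\leq\delta$ implies $d(x,\p)\leq\varepsilon$. Unwinding this, if $P(x)\leq\delta$ and $w:=E_{\n_1}(x)$, then $\|w-x\|_2\leq\delta$ and $w\in\n_1$ lies within $\delta$ of $\n_2'\cap\m$, hence $\sup_{y\in(\n_2)_{\leq 1}}\|[w,y]\|_2\leq 2\delta$; so everything reduces to showing that $w\in\n_1$ with $\sup_{y\in(\n_2)_{\leq 1}}\|[w,y]\|_2$ small forces $w$ to be uniformly close to $\p$.

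To attack this reduced estimate I would first replace $w$ by $E_{Z(\n_1)}(w)\in Z(\n_1)$, which moves $w$ by $O(\delta)$ via Popa's inequality applied inside $\n_1$, and then use $\p=Z(\n_1)\cap Z(\n_2)$ (valid since $\n_1\subseteq\n_2$) together with the observation that $Z(\n_1)$ and $Z(\n_2)$ commute; the problem becomes: an element of the abelian algebra $Z(\n_1)$ that almost commutes with $\n_2$ must be uniformly close to $Z(\n_1)\cap Z(\n_2)$. Equivalently, $E_\p$ should be recoverable as the strong limit of the alternating conditional expectations $(E_{\n_1}E_{\n_2'\cap\m})^n$ — which converge to the projection onto $L^2(\n_1)\cap L^2(\n_2'\cap\m)=L^2(\p)$ by von Neumann's alternating-projection theorem — and the real issue is that definability demands this convergence be \emph{uniform on bounded sets with a modulus independent of the model}. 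This uniformity is the step I expect to be the main obstacle: it amounts to a model-independent lower bound on the Friedrichs angle between $L^2(Z(\n_1))$ and $L^2(Z(\n_2))$, which I would try to extract from the non-commutativity of $\n_2$ built into the hypothesis (in small examples the commutator norm controls $d(w,\p)$ with a universal constant even as matrix sizes and central weights vary, which is encouraging). I note that if instead one had $\n_2\subseteq\n_1$, this obstacle evaporates: then $E_{\n_1}$ is bimodular over $\n_2$, so for $p\in\n_2'\cap\m$ one gets $y\,E_{\n_1}(p)=E_{\n_1}(yp)=E_{\n_1}(py)=E_{\n_1}(p)\,y$ for all $y\in\n_2$, i.e.\ $(\n_1,\n_2'\cap\m)$ is a commuting square, whence $E_\p=E_{\n_1}\circ E_{\n_2'\cap\m}$ is a composition of $A$-definable functions and the argument closes immediately.
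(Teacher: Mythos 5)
Your reduction is the same as the paper's: pass to the distance predicate $d(\cdot,\n_2'\cap\n_1)$, note that $E_{\n_1}$ is $A$-definable, and control $\|E_{\n_2'\cap\n_1}(x)-x\|_2$ by the triangle inequality through $E_{\n_1}(x)$. But you stop exactly at the point where the proof actually happens. The entire content of the lemma is the uniform estimate you flag as ``the main obstacle'': that $w\in\n_1$ with $\sup_{y\in(\n_2)_{\leq 1}}\|[w,y]\|_2$ small must be close to $\n_2'\cap\n_1$ with a modulus independent of $\m$, $\n_1$, $\n_2$. Announcing a candidate zeroset predicate $P(x)$ and then saying that the required uniformity ``amounts to a model-independent lower bound on the Friedrichs angle \ldots which I would try to extract'' is not a proof; without that estimate you have shown only that $\p$ is a zeroset, which is strictly weaker than definability. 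The empirical remark about small examples does not substitute for the bound.

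The paper closes this gap with a single displayed inequality: for $w=E_{\n_1}(x)\in\n_1$,
\[
\|E_{\n_2'\cap\n_1}(w)-w\|_2\ \leq\ \sqrt{\sup_{y\in(\n_2)_{\leq 1}}\|[w,y]\|_2},
\]
a one-step Popa-type estimate (note the modulus $\sqrt{t}$, which is harmless for definability), so that
$\|E_{\n_2'\cap\n_1}(x)-x\|_2\leq 2\|x-E_{\n_1}(x)\|_2+\sqrt{\sup_{y\in(\n_2)_{\leq 1}}\|[E_{\n_1}(x),y]\|_2}$ exhibits an explicit $A$-definable dominating predicate vanishing on $\p$. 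In particular no alternating-projection or angle argument is needed, and it is good that none is needed: your own reduction shows the naive iteration $w\mapsto E_{\n_1}(E_{\n_2'\cap\m}(w))$ only gives square-summable increments, and your closing observation about commuting squares points the wrong way --- in the configuration at hand ($\n_1\subseteq\n_2$) the pair $(\n_1,\n_2'\cap\m)$ is genuinely \emph{not} a commuting square in general (easy finite-dimensional examples, e.g.\ $\n_1$ a two-dimensional abelian subalgebra of $\n_2=M_2\oplus M_1\subseteq M_3$ not containing $Z(\n_2)$, already violate $E_{\n_1}E_{\n_2'\cap\m}=E_{\p}$ even on $\n_1$). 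So the composite of expectations is not $E_\p$, and the square-root inequality is doing real work that your write-up neither supplies nor replaces. To repair the proposal you would need to prove that displayed estimate (or some uniform modulus in its place); everything else in your argument is fine and matches the paper.
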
  

\begin{proof}
Since $\n_2'\cap \n_1=(\n_2'\cap \m)\cap \n_1$ and the intersection of two zerosets is again a zeroset, it suffices to show that the distance to $\n_2'\cap \n_1$ is a definable predicate.  To keep things straight, let $E_1:\m\to \n_1$ and $E_2:\m\to \n_2'\cap \n_1$ denote the respective conditional expectations.  By assumption, $E_1$ is $A$-definable.  If $x\in \m$, we have
\begin{alignat}{2}
\|E_2(x)-x\|_2&\leq \|E_2(x)-E_2(E_1(x))\|_2+\|E_2(E_1(x))-E_1(x)\|_2+\|E_1(x)-x\|_2 \notag \\ \notag
		&\leq 2\|x-E_1(x)\|_2+\sqrt{\sup_{y\in (\n_2)_{\leq 1}}\|[E_1(x),y]\|_2}.
\end{alignat}
Since $\n_2$ is an $A$-definable set and $E_1$ is an $A$-definable function, we see that $\n_2'\cap \n_1$ is an $A$-definable set.
\end{proof}

In particular, if $u,v,u_1,v_1$ are as above, then $C(u_1,v_1)'\cap C(u,v)$ is an $\{u,v,u_1,v_1\}$-definable subset of $\m$.


We note that Fact \ref{key.fact} (and the proof of Lemma 2.9 of \cite{BCI}) shows that a special subalgebra of a generalized McDuff ultraproduct is a good subalgebra with respect to some pair of good unitaries.  In the definition of $\V$ at depth $k$, one could modify the definition to only work with pairs of good unitaries instead of arbitrary countable tuples.  It follows from the work in \cite{BCI} that if $\m$ is a generalized McDuff ultraproduct, then $\m$ has $\V$ at depth $k$ if and only if $\m$ has $\V$ at depth $k$ in this augmented sense.

Returning now to the proof of Theorem \ref{main}, by the previous paragraph we see that at each play of the game, I could have chosen a pair of good unitaries instead of a countable sequence.  Moreover, player I would also choose good unitaries corresponding to special subalgebras whenever they played a special subalgebra.  Since II has a winning strategy by assumption, it follows that II always responds with pairs of good unitaries.  Indeed, suppose that I plays good unitaries u,v (say in $\m$) and then II responds with $u_1,v_1\in \n$.  Since II wins, we have that $u_1,v_1$ are unitaries.  To see that they are good, we need to play two more rounds of a side-game.  Fix $\zeta_1 \in \n$ and $\epsilon>0$.  Have I play $\zeta_1$ and have II reply with $\zeta\in \m$.  Since $u,v$ are good, there is $\eta\in C(u,v)$ such that
$$\|\zeta-\eta\|_2<100(\|[\zeta,u\|_2^2+\|[\zeta,v]\|_2^2)+\epsilon.$$  Have I play $\eta$ and II responds with $\eta_1\in \n$.  It follows that $\eta_1\in C(u_1,v_1)$ and $$\|\zeta_1-\eta_1\|_2<100(\|[\zeta_1,u_1\|_2^2+\|[\zeta_1,v_1]\|_2^2)+\epsilon.$$  Since $\zeta_1\in \n$ and $\epsilon>0$ were arbitrary, it follows that $u_1,v_1$ are good.

We see then that the subalgebras called $\m_1$ and $\n_1$ in the proof were in fact definable subalgebras defined over the parameters picked during the game.

We conclude this subsection with a discussion of what goes wrong when trying to distinguish $\m_{\balpha}$ and $\m_{\bbeta}$ with a sentence when $\balpha(0)=\bbeta(0)$ but $\balpha(1)=1$ and $\bbeta(1)=0$.  Motivated by the game played in the previous section, it seems natural to try to use sentences of the form
$$\sup_{u_1,v_1}\inf_{u_2,v_2}\sup_{u_3,v_3}\inf_{u_4,v_4} \chi,$$ where at every stage we quantify only over good unitaries above the previous unitaries in the partial order on tuples and $\chi$ expresses that $C(u_4,v_4)'\cap C(u_2,v_2)$ has $\V$.  There is no issue in saying that the unitaries involved are good and get progressively stronger; moreover, if the unitaries ``played'' at the $\inf$ stages yield a special subalgebra, then $C(u_4,v_4)'\cap C(u_2,v_2)$ is definable and so one can relativize the sentences from the previous subsection to this definable set and indeed express that this commutant has $\V$.  The issue arises in that there were ``mystery'' connectives $\gamma_n$ used in the sentences from the previous subsection and for different choices of good unitaries $u_3,v_3$, the generalized McDuff ultraproducts corresponding to $\balpha^\#$, $C(u_4,v_4)'\cap C(u_2,v_2)$, may require different connectives to express that they have $\V$.  Of course, a positive answer to the following question alleviates this concern and shows how one can find sentences distinguishing $\m_{\balpha}$ from $\m_{\bbeta}$ when $\balpha$ and $\bbeta$ differ for the first time at the second digit (and by induction one could in theory find sentences distinguishing all McDuff examples):     

\begin{question}
Given $\balpha\in 2^\omega$ and a countable group $\Gamma$, are all generalized McDuff ultraproducts corresponding to $\balpha$ and $\Gamma$ elementarily equivalent?
\end{question}

\subsection{Inner asymptotic commutativity and super McDuffness}

Motivated by Sakai's definition of asymptotically commutative II$_1$ factors from \cite{S}, Zeller-Meier introduced the following notion in \cite{ZM}:

\begin{df}
Suppose that $\m$ is a separable II$_1$ factor.  We say that $\m$ is \emph{inner asymptotically commutative} (IAC) if and only if there is a sequence of unitaries $(u_n)$ such that, for all $x,y\in \m$, we have $\lim_n \|[u_nxu_n^*,y]\|_2=0.$
\end{df}

\begin{prop}
Inner asymptotic commutativity is an axiomatizable property.
\end{prop}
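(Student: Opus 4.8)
The plan is to write down an explicit axiomatization, in the spirit of the axiomatizations of property Gamma and McDuffness in \cite{FHS}. For each $n\geq 1$, let $\sigma_n$ be the sentence
$$\sigma_n:=\sup_{x_1,\dots,x_n}\ \inf_{u\in U}\ \max_{1\leq i,j\leq n}\|[ux_iu^*,x_j]\|_2,$$
where $x_1,\dots,x_n$ range over the operator-norm unit ball and $u$ ranges over the unitary group $U$ (a definable set, so that this quantification is legitimate, exactly as in the sentences $\psi_n$ above). Each $\sigma_n$ is thus a genuine sentence of continuous logic, of quantifier-depth $2$. The claim to establish is that a \emph{separable} II$_1$ factor $\m$ is IAC if and only if $\sigma_n^\m=0$ for all $n\geq 1$. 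Granting this, IAC is axiomatized by the theory of II$_1$ factors together with the conditions $\{\,\sigma_n=0:n\geq 1\,\}$, which is the proposition.

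Unwinding the definition, ``$\sigma_n^\m=0$ for all $n$'' says exactly that for every finite tuple $\vec c$ from the unit ball of $\m$ and every $\epsilon>0$ there is $u\in U(\m)$ with $\|[uc_iu^*,c_j]\|_2<\epsilon$ for all $i,j$; restricting the $x_i$ to the unit ball costs nothing because the commutator expression is bihomogeneous in the $x_i$'s. In this reformulation the forward implication is immediate and uses no separability: if $(u_m)$ witnesses that $\m$ is IAC then, for a fixed finite tuple $\vec c$, the quantity $\max_{i,j}\|[u_mc_iu_m^*,c_j]\|_2$ is the maximum of finitely many sequences converging to $0$, hence is eventually $<\epsilon$.

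For the converse I would run the standard diagonalization, which is the real content of the argument. Fix a sequence $(c_k)_{k\geq 0}$ that is $\|\cdot\|_2$-dense in the unit ball of $\m$ --- this is the only place separability enters, which is appropriate since IAC is a notion about separable factors. For each $m\geq 1$, apply the reformulated hypothesis to the tuple $(c_0,\dots,c_m)$ with $\epsilon=1/m$ to get $u_m\in U(\m)$ with $\|[u_mc_iu_m^*,c_j]\|_2<1/m$ whenever $i,j\leq m$. One then checks that $(u_m)$ witnesses that $\m$ is IAC: given $x,y\in\m$, one may assume $\|x\|,\|y\|\leq 1$ by homogeneity; given $\epsilon>0$, choose $k,j$ with $\|x-c_k\|_2<\epsilon$ and $\|y-c_j\|_2<\epsilon$ and estimate
$$\|[u_mxu_m^*,y]\|_2\leq\|[u_mc_ku_m^*,c_j]\|_2+\|[u_m(x-c_k)u_m^*,y]\|_2+\|[u_mc_ku_m^*,y-c_j]\|_2.$$
Conjugation by a unitary preserves $\|\cdot\|_2$ and $\|x\|,\|c_k\|,\|y\|\leq 1$, so the last two terms are each at most $2\epsilon$, while the first is $<1/m$ once $m\geq\max(k,j)$; hence $\limsup_m\|[u_mxu_m^*,y]\|_2\leq 4\epsilon$, and letting $\epsilon\to0$ completes the verification.

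The only genuine obstacle is this final passage --- turning the ``one unitary per finite configuration'' content encoded in the $\sigma_n$ into the single uniform sequence of unitaries demanded by the definition of IAC --- and it is dispatched by the density/diagonal argument above; everything else is bookkeeping. An alternative, more structural proof would instead verify that the class of IAC II$_1$ factors is closed under isomorphism, ultraproducts, and ultraroots and then invoke the general axiomatizability criterion of continuous model theory, but exhibiting the $\sigma_n$ explicitly is preferable and makes the (low) quantifier complexity of the axioms transparent.
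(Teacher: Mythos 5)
Your proposal is correct and follows essentially the same route as the paper: the paper also axiomatizes IAC by the sentences $\sigma_n:=\sup_{\vec x,\vec y}\inf_u\max_{i,j\le n}\|[ux_iu^*,y_j]\|_2$ and proves the converse direction by choosing a countable dense set and extracting unitaries $u_n$ with $\|[u_na_iu_n^*,a_j]\|_2<1/n$ for $i,j\le n$. Your use of a single tuple of variables instead of two and your explicit commutator estimate for the diagonalization are only cosmetic differences (the paper leaves that final verification to the reader).
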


\begin{proof}
For $n\geq 1$, consider the sentence 
$$\sigma_n:=\sup_{\vec x,\vec y}\inf_u \max_{1\leq i,j\leq n}\|[ux_iu^*,y_j]\|_2.$$  We claim that a separable II$_1$ factor $\m$ is IAC if and only if $\sigma_n^\m=0$ for all $n$.  The forward implication is clear.  For the converse, suppose that $\sigma_n^\m=0$ for all $n$.  Let $\{a_i \ : \ i\in \n\}$ be a dense subset of $\m$.  For each $n$, let $u_n\in U(\m)$ be such that $\|[u_na_iu_n^*,a_j]\|_2<1/n$ for all $i,j\leq n$.  It then follows that $(u_n)$ witnesses that $\m$ is IAC.  
\end{proof}


Zeller-Meier also considers another property that may or may not hold for separable II$_1$ factors.  Before we can define this property, we need some preparation:

\begin{prop}
Suppose that $\m$ is a separable McDuff II$_1$ factor and $\m\preceq \C\preceq \tC$ with $\C$ and $\tC$ both $\aleph_1$-saturated.  Then the following are equivalent:
\begin{enumerate}
\item $Z(\m'\cap \C)=\mathbb{C}$
\item $Z(\m'\cap \tC)=\mathbb{C}$.
\end{enumerate}  
\end{prop}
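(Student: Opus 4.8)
The plan is to show that the two conditions are really capturing the same property of $\m$, namely whether the relative commutant of $\m$ in \emph{any} $\aleph_1$-saturated elementary extension has trivial center, and the point is that this is insensitive to which such extension we pick. First I would recall the general model-theoretic fact that if $\C \preceq \tC$ and $\C$ is $\aleph_1$-saturated, then $\m'\cap \C \preceq \m'\cap \tC$; this uses that $\m$ is separable (so generated by a countable tuple $\vec m$), so that ``$x$ commutes with $\m$'' is a countable type over $\vec m$, and that relative commutants of a fixed separable subalgebra are elementary submodels in this situation — this is essentially the content of the fact that $C(\vec m)$ is a definable set once we are working inside a sufficiently saturated structure, together with the observation that definable subsets of an elementary pair form an elementary pair. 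So $\mathcal{Q}:=\m'\cap \C$ sits inside $\tilde{\mathcal{Q}}:=\m'\cap \tC$ as an elementary submodel.

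The direction (1)$\Rightarrow$(2): suppose $Z(\mathcal{Q})=\mathbb{C}$ but there is $z\in Z(\tilde{\mathcal Q})$ with $z\notin \mathbb{C}$, i.e. $\|z - \tau(z)\|_2 = \delta > 0$. Being central in $\tilde{\mathcal Q}$ means $z\in \tilde{\mathcal Q}$ and $\sup_{y\in \tilde{\mathcal Q}_{\le 1}}\|[z,y]\|_2 = 0$. Now since $\mathcal Q \preceq \tilde{\mathcal Q}$ and $\tilde{\mathcal Q}$ thinks ``there exists an element at $\|\cdot - \tau(\cdot)\|_2$-distance $\ge \delta$ from the scalars that commutes with everything of norm $\le 1$'' — this is a first-order statement over $\mathcal Q$ in the language of tracial von Neumann algebras (the quantifier over $y$ in the unit ball is a legitimate $\sup$), reflected down to $\mathcal Q$ — we would get such an element in $\mathcal Q$, contradicting $Z(\mathcal Q)=\mathbb C$. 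The direction (2)$\Rightarrow$(1) is the same reflection argument run the other way, or more simply: if $\mathcal Q$ had a nontrivial central element, then since $\mathcal Q\subseteq\tilde{\mathcal Q}$ and any element of $Z(\mathcal Q)$ need not a priori stay central in the bigger algebra, one instead argues that $\mathcal Q\preceq\tilde{\mathcal Q}$ forces $Z(\tilde{\mathcal Q})$ and $Z(\mathcal Q)$ to have the same ``diameter away from scalars'' — both are $0$ or both are positive — because the predicate $\inf_{y} \max(\ldots)$ expressing ``$x$ is central of distance $\ge \delta$ from $\mathbb C$'' has the same infimum/supremum value in the two structures.

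I expect the main obstacle to be the claim that $\m'\cap\C\preceq\m'\cap\tC$, and more precisely the verification that $Z(\m'\cap\C)$ and $Z(\m'\cap\tC)$ being trivial is genuinely a first-order condition expressible uniformly. The subtlety is that ``having trivial center'' is a $\sup\inf$-type condition (there is no uniform modulus making ``$\|[x,y]\|_2$ small for all $y$ in the unit ball'' imply ``$x$ close to a scalar''), so one cannot just say the center is a definable set and be done. The clean way around this is to use $\aleph_1$-saturation of $\C$ and $\tC$ directly: triviality of the center of $\m'\cap\C$ is equivalent to the statement that for every $\delta>0$ the type asserting ``$x$ commutes with $\m$, $x$ commutes with all of $\m'\cap\C$, and $\|x-\tau(x)\|_2\ge\delta$'' is not realized, and by saturation non-realization is equivalent to finite inconsistency, which is a first-order phenomenon transferring between $\C$ and $\tC$ since $\C\preceq\tC$. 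I would structure the proof so that saturation of both models carries this transfer, avoiding any delicate definability-of-center issue; the separability of $\m$ is what makes the relevant type countable and hence amenable to $\aleph_1$-saturation.
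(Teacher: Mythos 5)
Your strategy has two genuine gaps, and they sit exactly where the paper's proof does its real work. First, the opening claim that $\m'\cap\C\preceq\m'\cap\tC$ is not a standard fact one can simply recall, and the justification you offer is incorrect: the relative commutant of a separable subalgebra is in general \emph{not} a definable set, and for McDuff $\m$ (the case at hand) it provably is not. Indeed, since $\m$ is a factor one checks that the conditional expectation of $\m'\cap\C$ sends each $x\in\m$ to $\tr(x)\cdot 1$; McDuffness then provides, for any finite $F\subseteq\m$ and $\delta>0$, a trace-zero unitary $u\in\m$ with $\max_{x\in F}\|[u,x]\|_2<\delta$ while $d(u,\m'\cap\C)\geq \|u\|_2=1$, so no formula over finitely many parameters (hence no definable predicate) can witness a definability modulus for $\m'\cap\C$. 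Popa's inequality only bounds $d(x,\m'\cap\C)$ by $\sup_{y\in\m_{\leq 1}}\|[x,y]\|_2$, a countable supremum of formulas, which is not a definable predicate; so ``definable subsets of an elementary pair form an elementary pair'' does not apply. Whether $\m'\cap\C\preceq\m'\cap\tC$ holds at all under these hypotheses would require a genuine argument, which you do not give and which the paper deliberately avoids. (Note the paper only obtains definability for commutants of \emph{good unitaries}, via the residual property, not for $\m'\cap\C$ itself.)

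Second, your proposed repair of the approximate-versus-exact problem fails: the type you want to declare unrealized (``$x$ commutes with $\m$, with all of $\m'\cap\C$, and $\|x-\tr(x)\cdot 1\|_2\geq\delta$'') has parameters running over $\m'\cap\C$, which is nonseparable, so $\aleph_1$-saturation does \emph{not} reduce its non-realization to finite inconsistency; and even if it did, the finitely-inconsistent conditions transferred along $\C\preceq\tC$ only mention parameters from $\m'\cap\C$, whereas condition (2) concerns commutation with all of $\m'\cap\tC$, so the transfer does not match the statement you need. The paper supplies exactly the missing device: for a \emph{fixed} witness $a$ of non-triviality, $\aleph_1$-saturation over the countable parameter set $\{a,m\}$ (with $m$ a single generator of $\m$, using McDuffness) yields a continuous $\gamma$ with $\gamma(0)=0$ such that $\sup_y(\|[a,y]\|_2\dotminus\gamma(\|[y,m]\|_2))=0$; this single formula encodes ``central in the relative commutant,'' transfers by elementarity between $\C$ and $\tC$, and in the downward direction the resulting $\inf_x$-statement with parameter $m$ alone is realized exactly in $\C$ by saturation, producing an honest non-scalar element of $Z(\m'\cap\C)$. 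Without this per-element modulus (or an equivalent substitute), both of your transfer steps stall.
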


\begin{proof}
First suppose that (2) fails, so there is $a\in Z(\m'\cap \tC)$ such that $d(a,\tr(a)\cdot 1)\geq \epsilon$.  Since $\m$ is McDuff, it is singly generated, say by $m\in \m$.  Since $(\tC,a,m)$ is $\aleph_1$-saturated, there is a continuous function $\gamma:[0,1]\to [0,1]$ with $\gamma(0)=0$ such that 
$$(\tC,a,m)\models \sup_y (\|[a,y]\|_2\dotminus \gamma(\|[y,m]\|_2)=0.$$  It follows that
$$(\tC,m)\models \inf_x\max(\|[x,m]\|_2,\sup_y(\|[x,y]\|_2\dotminus \gamma(\|[y,m]\|_2),\epsilon\dotminus d(x,\tr(x)\cdot 1)=0.$$  By elementarity, the same statement holds in $(\C,m)$; by saturation, the infimum is realized by $b\in \C$.  It follows that $b\in Z(M'\cap \C)\setminus \mathbb{C}$, so (1) fails.

Now suppose that (2) holds and consider $a\in Z(M'\cap \C)$.  Then there is a continuous function $\eta:[0,1]\to [0,1]$ with $\eta(0)=0$ such that
$$\C\models \sup_y(\|[y,a]\|_2\dotminus \eta(\|[y,m]\|_2))=0.$$  By elementarity, the same statement holds in $\tC$, that is, $a\in Z(M'\cap \tC)=\mathbb{C}$, whence (1) holds.
\end{proof}

Observe that the end of the above proof actually shows that, under the same hypotheses as in the proposition, we have $Z(M'\cap \C)\subseteq Z(M'\cap \tC)$.
\begin{cor}
Suppose that $\m$ is a separable McDuff II$_1$ factor.  Then the following are equivalent:
\begin{enumerate}
\item $Z(\m'\cap \C)=\mathbb{C}$ for every $\aleph_1$-saturated elementary extension $\C$ of $\m$.
\item $Z(\m'\cap \C)=\mathbb{C}$ for some $\aleph_1$-saturated elementary extension $\C$ of $\m$.
\end{enumerate}
\end{cor}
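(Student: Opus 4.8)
The plan is to derive the corollary from the preceding proposition by a standard amalgamation argument, applying that proposition twice. The implication $(1)\Rightarrow(2)$ is immediate, since $\m$ has at least one $\aleph_1$-saturated elementary extension, so the whole content is $(2)\Rightarrow(1)$.

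Assume $\C_0$ is an $\aleph_1$-saturated elementary extension of $\m$ with $Z(\m'\cap\C_0)=\mathbb{C}$, and let $\C$ be an arbitrary $\aleph_1$-saturated elementary extension of $\m$. First I would invoke elementary amalgamation, which is valid in continuous logic exactly as in the classical setting (apply compactness to the union of the elementary diagrams of $\C_0$ and of $\C$, identified along the elementary diagram of $\m$): there is an $\l$-structure $\mathcal{D}$ together with elementary embeddings of $\C_0$ and of $\C$ into $\mathcal{D}$ that agree on $\m$. Passing to an $\aleph_1$-saturated elementary extension $\tC$ of $\mathcal{D}$ and identifying $\C_0$ and $\C$ with their images, I obtain a single $\aleph_1$-saturated $\tC$ with $\m\preceq\C_0\preceq\tC$ and $\m\preceq\C\preceq\tC$.

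Now apply the proposition twice. Applied to the chain $\m\preceq\C_0\preceq\tC$, it gives $Z(\m'\cap\C_0)=\mathbb{C}\iff Z(\m'\cap\tC)=\mathbb{C}$; since the left-hand side holds by hypothesis, $Z(\m'\cap\tC)=\mathbb{C}$. Applied to the chain $\m\preceq\C\preceq\tC$, it gives $Z(\m'\cap\C)=\mathbb{C}\iff Z(\m'\cap\tC)=\mathbb{C}$; since the right-hand side has just been established, $Z(\m'\cap\C)=\mathbb{C}$, which is what we wanted.

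There is no real obstacle here: the only point requiring a little care is that the amalgam $\mathcal{D}$ (and hence $\tC$) can be taken inside the class of tracial von Neumann algebras, which is automatic because that class is elementary (indeed universally axiomatizable) and so is closed under the elementary substructures, elementary extensions, and amalgams used above, ensuring the proposition applies to every triple in sight. Note that the remark following the proposition already yields the containment $Z(\m'\cap\C_0)\subseteq Z(\m'\cap\tC)$ for free; it is the full biconditional in the proposition that makes the argument run symmetrically and lets the conclusion be transferred back down to the arbitrary extension $\C$.
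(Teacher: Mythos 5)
Your argument is correct and is exactly the intended deduction: the paper states the corollary without proof, and the standard route is precisely yours, namely amalgamate the two $\aleph_1$-saturated extensions over $\m$ inside a common $\aleph_1$-saturated elementary extension $\tC$ and apply the proposition to each chain $\m\preceq\C_0\preceq\tC$ and $\m\preceq\C\preceq\tC$. Your side remarks (elementary amalgamation holds in continuous logic, and the amalgam stays in the elementary class at hand) are the right points to check and are handled correctly.
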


\begin{df}
If $\m$ is a separable McDuff II$_1$ factor, we say that $\m$ is \emph{super McDuff} if either of the equivalent conditions of the previous corollary hold.
\end{df}

It would be nice to know if being super McDuff is axiomatizable, for then \cite{ZM} gives another example of a theory of II$_1$ factors. At the moment, the following proposition is the best that we can do.

\begin{prop}\label{super}
Suppose that $\m$, $\n$ are separable McDuff II$_1$ factors with $\m\preceq \n$.  If $\n$ is super McDuff, then so is $\m$.
\end{prop}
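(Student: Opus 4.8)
I would first use the preceding Corollary to reduce the statement to a computation in a single ultrapower, then a conditional expectation argument to reduce further to an inclusion of relative commutants, and finally isolate the one genuinely non-formal step.

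\textbf{Reduction.} Fix a nonprincipal ultrafilter $\u$ on $\N$. Since $\m\preceq\n\preceq\n^\u$ and $\n^\u$ is $\aleph_1$-saturated, $\n^\u$ is an $\aleph_1$-saturated elementary extension of the separable McDuff factor $\m$, so by the preceding Corollary it suffices to prove $Z(\m'\cap\n^\u)=\mathbb C$. Applying that same Corollary to $\n$, the hypothesis that $\n$ is super McDuff gives $Z(\n'\cap\n^\u)=\mathbb C$. Thus the task becomes: assuming $\m\preceq\n$ and $Z(\n'\cap\n^\u)=\mathbb C$, show $Z(\m'\cap\n^\u)=\mathbb C$ --- keeping in mind that $\m\subseteq\n$ gives $\n'\cap\n^\u\subseteq\m'\cap\n^\u$, so the \emph{larger} algebra must be shown still to have trivial center.

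\textbf{Reduction to an inclusion of relative commutants.} Let $a\in Z(\m'\cap\n^\u)$; we may assume $a=a^*$, and we must show $a\in\mathbb C$. Since $\n'\cap\n^\u\subseteq\m'\cap\n^\u$ and $a$ is central in $\m'\cap\n^\u$, the element $a$ commutes with $\n'\cap\n^\u$; applying the trace-preserving conditional expectation $E\colon\n^\u\to\n'\cap\n^\u$ and using that $E$ is $(\n'\cap\n^\u)$-bimodular shows $E(a)\in Z(\n'\cap\n^\u)=\mathbb C$, hence $E(a)=\tr(a)\cdot 1$ and $a-\tr(a)\cdot 1$ is orthogonal in $L^2$ to $\n'\cap\n^\u$. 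Since $a-\tr(a)\cdot 1$ still lies in $Z(\m'\cap\n^\u)$, it is now enough to prove the inclusion $Z(\m'\cap\n^\u)\subseteq\n'\cap\n^\u$: an element that lies in $\n'\cap\n^\u$ and is orthogonal to it must vanish.

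\textbf{The crux and the expected obstacle.} What remains is to show that every central element of $\m'\cap\n^\u$ commutes with all of $\n$, and this is where both McDuff hypotheses must be used essentially; I expect it to be the main obstacle, since it is not formal --- $\n$ is not contained in $\m'\cap\n^\u$, and the inclusion $\m\subseteq\n$ carries no tensor decomposition, so commuting with $\m$ and with $\m'\cap\n^\u$ does not by itself force commutation with $\n$. The approach I would take: use McDuffness of $\m$ to write $\m$ as singly generated, say by $m$, and, exactly as in the proof of the preceding Proposition, use $\aleph_1$-saturation of $\n^\u$ to upgrade ``$a$ is central in $\m'\cap\n^\u$'' to a uniform estimate $\|[a,y]\|_2\le\gamma\bigl(\|[y,m]\|_2+\|[y,m^*]\|_2\bigr)$ valid for all $y\in\n^\u$ and some modulus $\gamma$ with $\gamma(0)=0$; one then wants to exploit that $\m$ is McDuff \emph{inside} $\n$ --- so $\m$ absorbs $\mathcal{R}$ tensorially and $\n^\u$ has abundant central sequences --- to force $\m'\cap\n^\u$ to be generated by $\n'\cap\n^\u$ together with a part of $\n$ on which $a$ acts trivially. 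A tempting shortcut is to realize, by saturation, an \emph{elementary} copy of $\n$ inside $\n^\u$ that contains $\m$ and commutes with $a$, and then apply the Corollary to that copy; but this meets precisely the difficulty one anticipates: the natural amalgamation takes place over $W^*(\m,a)=\m\mathbin{\overline{\otimes}}W^*(a)$, which is never an elementary submodel of the factor $\n^\u$ (its center is $W^*(a)\neq\mathbb C$), so producing an honestly elementary copy of $\n$ out of merely matching $*$-moments over $\m$ is the delicate point on which the argument turns.
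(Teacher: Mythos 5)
Your setup is fine as far as it goes: the reduction via the preceding Corollary to the single statement $Z(\n'\cap\n^\u)=\mathbb{C}\Rightarrow Z(\m'\cap\n^\u)=\mathbb{C}$ is correct, and the conditional-expectation step showing that it suffices to prove $Z(\m'\cap\n^\u)\subseteq\n'\cap\n^\u$ is sound. But that inclusion is, under the hypothesis, trivially \emph{equivalent} to the conclusion (if $Z(\m'\cap\n^\u)=\mathbb{C}$ it holds vacuously), so the reduction buys nothing, and the step you label ``the crux'' is exactly the whole proposition. Neither of your two sketches closes it: the uniform estimate $\|[a,y]\|_2\leq\gamma(\|[y,m]\|_2)$ only controls $[a,y]$ for $y$ nearly commuting with $\m$, which elements of $\n$ need not be; and you yourself note that the amalgamation shortcut fails. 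So the proposal has a genuine gap at its only substantive step.

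The idea you are missing is that one should not try to show that the \emph{given} central element commutes with $\n$; one should replace it. The paper argues the contrapositive: given a witness $a\in Z(\m'\cap\m^\u)\setminus\mathbb{C}$ (arranging $\m\preceq\n\preceq\m^\u$), set $p:=\tp(a/\m)$ and form the canonical extension $p^\u$ over $\m^\u$ defined by $\varphi(x,(b_i)^\bullet)^{p^\u}:=\lim_\u\varphi(x,b_i)^p$ --- a coheir-type extension. Because every $b\in\n\subseteq\m^\u$ is represented by a sequence from $\m$ and $p$ asserts commutation with $\m$, \emph{any} realization of $p^\u|\n$ automatically commutes with all of $\n$; saturation arguments (Claims 2--4 in the paper) show such realizations also lie in $Z(\n'\cap\m^\u)$; and non-algebraicity of $p$ passes to $p^\u$ (Lemma 4.11), so some realization is non-scalar. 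This sidesteps precisely the obstruction you identified --- the witness for $\n$ is a new element produced from the type of the old one, not the old element itself --- and it is the step your direct approach cannot reproduce without it.
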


First, we need a little bit of preparation.  Given $p\in S(\m)$, we define $p^\u\in S(\m^\u)$ by declaring, for every formula $\varphi(x,y)$ and every element $a:=(a_i)^\bullet\in \m^\u$, $\varphi(x,a)^{p^\u}:=\lim_\u \varphi(x,a_i)^p$.

\begin{lemma}\label{notalgebraic}
If $p\in S(\m)$ is not algebraic, then neither is $p^\u\in S(\m^\u)$.
\end{lemma}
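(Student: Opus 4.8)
The plan is to use the standard characterization of non-algebraicity in terms of separated realizations, together with the fact that the ultrapower construction carries elementary embeddings to elementary embeddings. Recall that a complete type $q$ over a structure is non-algebraic precisely when the set of its realizations in a sufficiently saturated elementary extension fails to be totally bounded (this realization set is always closed, hence complete, so ``non-compact'' and ``not totally bounded'' coincide); equivalently, there is $\epsilon>0$ and an infinite family of realizations of $q$, in some elementary extension, that are pairwise at distance $\geq\epsilon$. Starting from $p\in S(\m)$ non-algebraic, I would fix such an $\epsilon$ and pass to an elementary extension $\m^*\succeq\m$ containing realizations $(a_i)_{i<\omega}$ of $p$ with $d(a_i,a_j)\geq\epsilon$ whenever $i\neq j$. (Concretely: the partial type asserting ``$x_i\models p$ for all $i$ and $d(x_i,x_j)\geq\epsilon$ for $i\neq j$'' is finitely satisfiable in a sufficiently saturated elementary extension of $\m$, since $p$'s realization set there is not totally bounded; realize it, and let $\m^*$ be a small elementary submodel containing $\m$ together with the witnesses.)

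Next, since $\m\preceq\m^*$ and $\prod_\u(-)$ is functorial on elementary embeddings, we obtain $\m^\u\preceq(\m^*)^\u$; identify $\m^\u$ with its image. For each $i$, let $c_i\in(\m^*)^\u$ be the class of the constant sequence $(a_i,a_i,\ldots)$. Two routine checks finish the proof. First, for $i\neq j$ we have $d(c_i,c_j)=\lim_\u d(a_i,a_j)=d(a_i,a_j)^{\m^*}\geq\epsilon$. Second, $\tp(c_i/\m^\u)=p^\u$: for every formula $\varphi(x,y)$ and every $b=(b_s)^\bullet\in\m^\u$ (each $b_s$ a tuple from $\m$),
\[
\varphi(c_i,b)^{(\m^*)^\u}=\lim_\u\varphi(a_i,b_s)^{\m^*}=\lim_\u\varphi(x,b_s)^p=\varphi(x,b)^{p^\u},
\]
where the middle equality uses that $a_i$ realizes $p$ over $\m$ inside $\m^*$ and the last is the definition of $p^\u$. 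Hence $(c_i)_{i<\omega}$ is an infinite $\epsilon$-separated family of realizations of $p^\u$ inside the elementary extension $(\m^*)^\u$ of $\m^\u$, so $p^\u$ is not algebraic.

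I do not expect a real obstacle here beyond bookkeeping. The one point worth flagging is that one should not attempt to realize $p^\u$ inside $\m^\u$ itself: although $\m^\u$ is $\aleph_1$-saturated, $p^\u$ is a type over the \emph{non}-separable structure $\m^\u$, so $\aleph_1$-saturation is of no help. This is exactly why the argument is routed through the ultrapower $(\m^*)^\u$ of an elementary extension — a step that is legitimate precisely because the ultrapower functor preserves elementary embeddings.
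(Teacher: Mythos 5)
Your argument is correct: the metric characterization of non-algebraicity (the realization set in a sufficiently saturated elementary extension is closed, hence compact iff totally bounded, so non-algebraicity yields an $\epsilon$-separated infinite family of realizations in some $\m^*\succeq\m$), the use of $\m^\u\preceq(\m^*)^\u$, and the \L o\'s computation showing each diagonal element $c_i$ realizes $p^\u$ are all sound, and the $c_i$ stay $\epsilon$-separated, so $p^\u$ is non-algebraic. The paper uses the same core mechanism---diagonal images, in the ultrapower of an elementary extension, of realizations of $p$ realize $p^\u$---but runs the contrapositive and needs only a single witness: assuming $p^\u$ algebraic, any realization $a$ of $p$ in some $\n\succeq\m$ gives $a^\bullet\in\n^\u$ realizing $p^\u$, which by algebraicity of a type over the model $\m^\u$ must lie in $\m^\u$; then $a$ is a limit of elements of $\m$, hence lies in $\m$, and since $a$ was arbitrary, $p$ is algebraic. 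The paper's route is shorter because it leans on the standard fact that an algebraic type over a model is realized only in that model and never needs an infinite separated family; your route avoids invoking that fact and instead makes the total-boundedness content of algebraicity explicit, at the cost of the extra bookkeeping with $\m^*$ and the family $(a_i)$. Both are complete proofs of the lemma.
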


\begin{proof}
Suppose that $p^\u$ is algebraic.  Let $\n$ be an elementary extension of $\m$ containing a realization $a$ of $p$.  Then $a^\bullet\in \n^\u$ is a realization of $p^\u$, whence it belongs to $\m^\u$ by algebraicity of $p^\u$.  It follows that $a$ is the limit of a sequence from $\m$, whence it belongs to $\m$ as well.  Since $a$ was an arbitrary realization of $p$, we conclude that $p$ is algebraic.
\end{proof}

\begin{proof}[Proof of Proposition \ref{super}]
Fix a nonprincipal ultrafilter $\u$ on $\N$.  Without loss of generality, we may assume that $\m\preceq \n\preceq \m^\u$.  Suppose that $\m$ is not super McDuff as witnessed by $a\in Z(\m'\cap \m^\u)\setminus \mathbb{C}$.  Let $p:=\tp(a/\m)$; since $\m$ is a II$_1$ factor, $p$ is not algebraic, whence neither is $p^\u$.  Let $\C$ be a $(2^{\aleph_0})^+$-saturated elementary extension of $\m^\u$.

\noindent \textbf{Claim 1:}  $p^\u(\C)\subseteq (\m^\u)'\cap \C$.

\noindent \textbf{Proof of Claim 1:}  Let $\varphi(x,y)$ denote the formula $\|[x,y]\|_2$.  Then for any $b\in \m$, we have that $\varphi(x,b)^p=0$, whence it follows that for any element $b\in \m^\u$ we have $\varphi(x,b)^{p^\u}=0$, verifying the claim.

%
%

\noindent \textbf{Claim 2:}  $p(\m^\u)\subseteq Z(\m'\cap \m^\u)$.

\noindent \textbf{Proof of Claim 2:}  Fix $\epsilon>0$.  Then the following set of conditions is unsatisfiable in $\m^\u$:
$$\{\|[x,b]\|_2=0 \ : \ b\in \m\}\cup\{\|[x,a]\|_2\geq \epsilon\}.$$  By saturation, there are $b_1,\ldots,b_n\in \m$ such that the following meta-statement is true in $\m^\u$:
$$\m^\u\models \forall x\left(\max_{1\leq i\leq n}\|[x,b_i]\|_2=0\rightarrow (\|[x,a]\|_2\dotminus \epsilon)=0\right).$$  As above, by saturation this meta-statement can be made into an actual first-order formula with parameters from $\m$ that holds of $a$, whence it holds of any other realization of $p$ in $\m^\u$.  This shows that if $a'\in p(\m^\u)$ and $c\in \m'\cap \m^\u$, then $\|[a',c]\|_2\leq \epsilon$; since $\epsilon>0$ is arbitrary, this proves the claim.

\noindent \textbf{Claim 3:}  $p^\u(\C)\subseteq Z((\m^\u)'\cap \C)$.

\noindent \textbf{Proof of Claim 3:}  Suppose that $a'\in \C$ realizes $p^\u$.  Fix $b'\in (\m^\u)'\cap \C$ .  Take $a'',b''\in \m^\u$ such that $\tp(a',b'/\m)=\tp(a'',b''/\m)$.  By Claim 2, $a''\in Z(\m'\cap \m^\u)$.  Note also that $b''\in \m'\cap \m^\u$.  It follows that $\|[a',b']\|_2=\|[a'',b'']\|_2=0$, yielding the desired conclusion.



In order to establish that $\n$ is not super McDuff, by Lemma \ref{notalgebraic}, it suffices to establish the following claim:

\noindent \textbf{Claim 4:} $p^\u|\n(\m^\u)\subseteq Z(\n'\cap \m^\u)$. 

\noindent \textbf{Proof of Claim 4:}
Arguing as in the proof of Claim 1, we see that $p^\u|\n(\m^\u)\subseteq \n'\cap \m^\u$.  Now suppose that $a'\in p^\u|\n(\m^\u)$ and $b'\in \n'\cap \m^\u$.  Then $a'\in Z(\m'\cap \m^\u)$ by Claim 2 and $b'\in \m'\cap \m^\u$, so $[a,b]=0$ as desired.  
\end{proof}

\subsection{The first-order fundamental group} For a II$_1$ factor $\m$ and $t\in \R_+$, we let $\m_t$ denote the amplification of $\m$ by $t$.  Note that if $\u$ is an ultrafilter, then $(\m^\u)_t$ is canonically isomorphic to $(\m_t)^\u$, whence we can unambiguously write $\m_t^\u$.  

Recall that the fundamental group of $\m$ is the set $\F(\m):=\{t\in \R_+ \ : \ \m_t\cong \m\}$.  $\F(\m)$ is a (not necessarily closed) subgroup of $\R_+$.  We now consider the \emph{first-order fundamental group of $\m$}, $\FF(\m):=\{t\in \R_+ \ : \ \m_t\equiv \m\}$.  Clearly $\F(\m)\subseteq \FF(\m)$.  As the name indicates, $\FF(\m)$ is actually a group.  The easiest way to see this is to recognize that $\FF(\m)$ is absolute, whence, assuming CH, we have $\FF(\m)=\F(\m^\u)$ for a fixed ultrafilter $\u$ on $\N$.  Alternatively, one can use Keisler-Shelah as follows.  Suppose that $s,t\in \FF(\m)$.  By Keisler-Shelah, there is $\u$ such that $\m^\u\cong \m^\u_s$.  Note now that $\m^\u\equiv \m^\u_t$, whence there is $\v$ such that $\m^\u\cong (\m^\u_t)^\v$.  We then have
$$(\m^\u)^\v\cong (\m^\u_t)^\v\cong ((\m^\u_s)_t)^\v=(\m^\u_{st})^\v\cong ((\m_{st})^\u)^\v,$$ whence it follows that $\m\equiv \m_{st}$.

Unlike the ordinary fundamental group, the first-order fundamental group is a closed subgroup of $\R_+$.  Indeed, if $(r_k)$ is a sequence from $\R_+$ with limit $r\in \R_+$, it is easy to verify that $\prod_\u \m_{r_k}\cong \m_r^\u$ for any nonprincipal ultrafilter $\u$ on $\N$; if each $r_k\in \FF(\m)$, then $\prod_\u \m_{r_k}\equiv \m$, whence it follows that $r\in \FF(\m)$.

In summary:

\begin{prop}
$\FF(\m)$ is a closed subgroup of $\R_+$ containing $\F(\m)$.
\end{prop}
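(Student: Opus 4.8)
The plan is to verify the three assertions of the proposition separately: the containment $\F(\m)\subseteq\FF(\m)$, the fact that $\FF(\m)$ is a subgroup, and closedness in $\R_+$.

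The containment is immediate: if $\m_t\cong\m$ then a fortiori $\m_t\equiv\m$, so $\F(\m)\subseteq\FF(\m)$, and in particular $1\in\FF(\m)$. For the group structure, the cleanest route is to observe that the relation ``$\m_t\equiv\m$'' is absolute between transitive models of ZFC (after coding, it asserts that two fixed separable structures assign the same value to each sentence, an arithmetic statement about their presentations), so a given instance may be checked in a model of CH. Under CH, for a fixed nonprincipal ultrafilter $\u$ on $\N$, the ultrapower $\m^\u$ is saturated of density character $\aleph_1$; since $(\m^\u)_t\cong(\m_t)^\u$ canonically, we get, for such $t$, that $t\in\FF(\m)$ iff $\m_t\equiv\m$ iff $\m_t^\u\cong\m^\u$ (these being saturated models of density character $\aleph_1$ with the same theory) iff $(\m^\u)_t\cong\m^\u$ iff $t\in\F(\m^\u)$. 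Hence $\FF(\m)=\F(\m^\u)$, and the ordinary fundamental group is always a subgroup of $\R_+$. Alternatively, to avoid CH one runs the Keisler--Shelah bookkeeping already sketched in the text: given $s,t\in\FF(\m)$, pick $\u$ with $\m^\u\cong\m^\u_s$ and then $\v$ with $\m^\u\cong(\m^\u_t)^\v$, and chase $(\m^\u)^\v\cong(\m^\u_s)_t{}^\v=(\m^\u_{st})^\v\cong((\m_{st})^\u)^\v$ to conclude $\m\equiv\m_{st}$; inversion is handled the same way via $\m_{1/t}$.

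For closedness, suppose $(r_k)$ is a sequence in $\FF(\m)$ with limit $r\in\R_+$, and fix a nonprincipal ultrafilter $\u$ on $\N$. The key point is the identification $\prod_\u\m_{r_k}\cong\m_r^\u$. Realizing each amplification as a corner $p_k\bigl(\m\otimes B(H)\bigr)p_k$ for a projection $p_k$ of trace $r_k$ (von Neumann tensor product, $H$ a fixed separable infinite-dimensional Hilbert space), one checks that the ultraproduct of these corners is the corner of $\prod_\u\bigl(\m\otimes B(H)\bigr)=\bigl(\prod_\u\m\bigr)\otimes B(H)$ by a projection of trace $\lim_\u r_k=r$; that is, $\prod_\u\m_{r_k}\cong\bigl(\prod_\u\m\bigr)_r=\m_r^\u$. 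Granting this, since each $\m_{r_k}\equiv\m$, {\L}o\'s's theorem gives $\prod_\u\m_{r_k}\equiv\m$, hence $\m_r^\u\equiv\m$, and therefore $\m_r\equiv\m$ since $\m_r\preceq\m_r^\u$. Thus $r\in\FF(\m)$, completing the proof.

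The main obstacle is the closedness step: one must carefully justify the identification $\prod_\u\m_{r_k}\cong\m_r^\u$ for a convergent sequence of (possibly non-integer) amplification parameters, which means working with projections in $\m\otimes B(H)$ and checking that their traces pass correctly to the limit along $\u$. The absoluteness remark used in the group-structure step is routine but should be stated with some care, or sidestepped entirely by the Keisler--Shelah argument.
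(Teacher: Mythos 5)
Your argument is correct and is essentially the paper's own: the containment is immediate, the group structure comes from absoluteness plus CH (giving $\FF(\m)=\F(\m^\u)$) or alternatively the Keisler--Shelah bookkeeping, and closedness from the identification $\prod_\u\m_{r_k}\cong\m_r^\u$ together with {\L}o\'s's theorem and $\m_r\preceq\m_r^\u$. The only quibble is your justification of that identification via $\prod_\u\bigl(\m\otimes B(H)\bigr)=\bigl(\prod_\u\m\bigr)\otimes B(H)$, which is not meaningful for tracial ultraproducts (the trace on $\m\otimes B(H)$ is only semifinite); it is cleaner to realize all the corners inside $M_N(\m)$ for a fixed integer $N\geq\sup_k r_k$ and use $M_N(\m)^\u\cong M_N(\m^\u)$, after which your computation with the projections $p_k$ and $\lim_\u r_k=r$ goes through verbatim.
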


\begin{question}
Does there exist a separable II$_1$ factor $\m$ for which $\FF(\m)\not=\R_+$?
\end{question}

Recall that II$_1$ factors $\m$ and $\n$ are said to be \emph{stably isomorphic} if $\m\cong \n_t$ for some $t\in \R_+$.  So the above question is equivalent to the question:  does stable isomorphism imply elementary equivalence?  Since all of the free group factors are stably isomorphic, a special case of the above question is whether or not all of the free group factors are elementarily equivalent (a question Thomas Sinclair has called the \emph{noncommutative Tarski problem}).

In connection with the number of theories of II$_1$ factors, we have:

\begin{prop}
Suppose that $\m$ is a separable II$_1$ factor with $\FF(\m)\not=\R_+$.  Then $$|\{\Th(\m_t) \ : \ t\in \R_+\}|=2^{\aleph_0}.$$
\end{prop}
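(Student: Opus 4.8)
The plan is to exploit the fact that $\FF(\m)$ is a closed \emph{proper} subgroup of the multiplicative group $\R_+$, together with a counting argument based on the countably many isomorphism types that a single separable structure can have among its amplifications. First I would recall the structure of closed subgroups of $\R_+\cong (\R,+)$ (via $t\mapsto \log t$): a closed subgroup of $\R$ is either $\{0\}$, all of $\R$, or a discrete cyclic group $c\Z$ for some $c>0$. Since $\FF(\m)$ is closed, contains $\F(\m)\ni 1$, and is proper by hypothesis, it must therefore be of the form $\{t\in\R_+ \ : \ \log t\in c\Z\}$ for some $c>0$ (the case $\FF(\m)=\{1\}$ is the subcase where the generator is, say, any $t_0\neq 1$ in $\F(\m)$ — but actually $\FF(\m)\supseteq \F(\m)$ and $\F(\m)$ may be trivial, so the relevant point is just that $\FF(\m)$ is a proper closed subgroup, hence cyclic-or-trivial; in all cases it is \emph{countable}).

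The key step is then: since $\m_s\equiv \m_t$ if and only if $\m_{s/t}\equiv \m$ (because $(\m_t)_{s/t}=\m_s$, using the canonical identification $(\m_t)_r=\m_{rt}$), the relation $s\sim t \iff \m_s\equiv\m_t$ is exactly the coset equivalence relation of the subgroup $\FF(\m)$ inside $\R_+$. Hence
$$|\{\Th(\m_t) \ : \ t\in \R_+\}| = |\R_+/\FF(\m)|.$$
Now $\FF(\m)$ is countable (being trivial or cyclic), so $\R_+/\FF(\m)$ has cardinality $2^{\aleph_0}$: indeed $\R_+$ has cardinality $2^{\aleph_0}$ and is partitioned into cosets each of size $\leq \aleph_0$, so there must be $2^{\aleph_0}$ cosets. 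This gives the lower bound $|\{\Th(\m_t)\}|\geq 2^{\aleph_0}$; the upper bound $\leq 2^{\aleph_0}$ is automatic since each $\Th(\m_t)$ is a set of sentences in a fixed countable language, of which there are at most $2^{\aleph_0}$.

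I expect the only genuinely delicate point to be the identification of $\{\Th(\m_t):t\in\R_+\}$ with the coset space, which rests on two facts I would make sure to state cleanly: first, the functoriality $(\m_t)_r=\m_{rt}$ of amplification (so that $\m_s\equiv\m_t$ reduces to $\m_{s/t}\equiv\m$); and second, that $\FF(\m)$ being closed (Proposition above) forces it to be countable once it is proper. Everything else is routine cardinal arithmetic — partitioning a set of size $2^{\aleph_0}$ into countable blocks yields $2^{\aleph_0}$ blocks — and the elementary classification of closed subgroups of $\R$. No substantial obstacle remains.
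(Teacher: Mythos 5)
Your argument is correct and is essentially the paper's own proof: the paper likewise notes that the map $t\FF(\m)\mapsto \Th(\m_t)$ is injective on cosets of $\FF(\m)$ and that a proper closed subgroup of $\R_+$ is countable (trivial or discrete cyclic under $\log$), so the coset space, and hence the set of theories, has cardinality $2^{\aleph_0}$. The only step worth stating a bit more explicitly than your parenthetical ``because $(\m_t)_{s/t}=\m_s$'' is that the equivalence $\m_s\equiv\m_t \iff \m_{s/t}\equiv\m$ also uses that elementary equivalence is preserved under amplification, which follows from Keisler--Shelah together with the identification $(\m^\u)_r\cong(\m_r)^\u$ recorded at the start of the subsection.
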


\begin{proof}
Since the map $t\FF(\m)\mapsto \Th(\m_t)$ is injective, the result follows from the fact that closed subgroups of $\R_+$ are countable.
\end{proof}

It seems very unlikely that $\FF(\m)=\R_+$ for all separable II$_1$ factors $\m$.  In fact, it seems very unlikely that $\m\equiv \m_2(\m)$ for all separable II$_1$ factors $\m$.  Let $\FA(\m):=\{t\in \R_+ \ : \ \m\equiv_\forall \m_t\}$.  Of course, if CEP holds, then $\FA(\m)=\R_+$ for any II$_1$ factor $\m$, so what follows is only interesting if CEP fails.

\newpage
\begin{prop}\label{equivalence}
The following statements are equivalent:
\begin{enumerate}
\item If $\m$ is existentially closed, then $\m$ is McDuff.
\item If $\m$ is existentially closed, then $2\in \FA(\m)$.
\item For any II$_1$ factor $\m$, $2\in \FA(\m)$.
\item For any II$_1$ factor $\m$, $\m\equiv_\forall \m\otimes \r$.
\item For any II$_1$ factor $\m$, $\FA(\m)=\R_+$.
\end{enumerate}
\end{prop}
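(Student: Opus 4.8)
The plan is to prove the cycle $(1)\Rightarrow(2)\Rightarrow(3)\Rightarrow(4)\Rightarrow(5)\Rightarrow(1)$. Three of these steps are purely formal manipulations with amplifications and tensor products. For $(1)\Rightarrow(2)$: if $\m$ is existentially closed then it is McDuff by $(1)$, so $\m\cong\m\otimes\r$, and since $\r_t\cong\r$ for all $t$ we get $\m_t\cong(\m\otimes\r)_t\cong\m\otimes\r_t\cong\m$; thus $\FA(\m)=\R_+\ni 2$. For $(3)\Rightarrow(4)$: realizing $\r$ as the tracial completion of an increasing union of copies of $M_{2^n}(\mathbb C)$ exhibits $\m\otimes\r$ as the tracial completion of the increasing union $\bigcup_n\m_{2^n}$, with connecting maps the diagonal inclusions $\m_{2^n}\hookrightarrow(\m_{2^n})_2=\m_{2^{n+1}}$; iterating the hypothesis $2\in\FA(\m_{2^k})$ gives $\m\equiv_\forall\m_2\equiv_\forall\m_4\equiv_\forall\cdots$, so every universal sentence takes a single value on all the $\m_{2^n}$ and hence that same value on the completed union $\m\otimes\r$, i.e.\ $\m\equiv_\forall\m\otimes\r$. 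For $(4)\Rightarrow(5)$: since $\m_t\otimes\r\cong(\m\otimes\r)_t\cong\m\otimes\r_t\cong\m\otimes\r$, applying $(4)$ to $\m$ and to $\m_t$ gives $\m\equiv_\forall\m\otimes\r\cong\m_t\otimes\r\equiv_\forall\m_t$, so $t\in\FA(\m)$.

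For $(5)\Rightarrow(1)$ I would run an ``$M_2$-trick.'' Let $\mathcal E$ be existentially closed; by $(5)$, $2\in\FA(\mathcal E)$, i.e.\ $\mathcal E\equiv_\forall M_2(\mathcal E)$, so $M_2(\mathcal E)$ embeds into an ultrapower $\mathcal E^\u$. The image of the diagonal copy $1\otimes\mathcal E$ of $\mathcal E$ inside $M_2(\mathcal E)=M_2(\mathbb C)\otimes\mathcal E$ is a copy $\mathcal E_0\cong\mathcal E$ inside $\mathcal E^\u$ whose relative commutant in $\mathcal E^\u$ contains a copy of $M_2(\mathbb C)$ (the image of $M_2(\mathbb C)\otimes 1$). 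Since $\mathcal E_0\cong\mathcal E$ is existentially closed, the inclusion $\mathcal E_0\subseteq\mathcal E^\u$ is an existential embedding. Applying this to the existential formula which, for a fixed finite tuple $\vec a$ from $\mathcal E_0$, asserts the existence of elements $(e_{jk})_{j,k\le 2}$ satisfying the $2\times2$ matrix-unit relations up to $\epsilon$ and with $\max_i\max_{j,k}\|[e_{jk},a_i]\|_2<\epsilon$ — a formula whose value in $\mathcal E^\u$ is $0$, witnessed exactly by the $M_2(\mathbb C)$ just produced — yields, for every finite subset of $\mathcal E_0$ and every $\epsilon>0$, such an approximate $2\times2$ system of matrix units inside $\mathcal E_0$. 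Hence $\mathcal E_0$ has an approximately central copy of $M_2(\mathbb C)$, so $\mathcal E_0$, and therefore $\mathcal E$, is McDuff.

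The one substantial step is $(2)\Rightarrow(3)$: transferring the statement from existentially closed factors to arbitrary II$_1$ factors. Fix an arbitrary II$_1$ factor $\m$; it suffices to prove $\m\equiv_\forall\m\otimes\r$, for then $\m\subseteq M_2(\m)\subseteq\m\otimes\r$ forces every existential sentence to take the same value on $\m$ and on $M_2(\m)$, i.e.\ $2\in\FA(\m)$. My intended route: produce an existentially closed II$_1$ factor $\mathcal E$ with $\Th_\forall(\mathcal E)=\Th_\forall(\m)$; by $(2)$ together with the $M_2$-trick, $\mathcal E$ is McDuff, so $\m\otimes\r\hookrightarrow\mathcal E\otimes\r\cong\mathcal E$, and the equality of universal theories provides embeddings of suitable ultrapowers in both directions, whence $\m\otimes\r\hookrightarrow\m^\u$ and so $\m\equiv_\forall\m\otimes\r$. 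The hard part — the step I expect to be the real obstacle — is precisely producing such an $\mathcal E$: the existential-closure operation applied to $\m$ may a priori shrink the universal theory, so one must carry out the construction so that no universal sentence true in $\m$ is lost, for instance by building the closure inside a sufficiently saturated ultrapower of $\m$ or by otherwise restricting which factor extensions are used at each stage.
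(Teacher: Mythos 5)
Your proof is correct and follows the same cyclic decomposition $(1)\Rightarrow(2)\Rightarrow(3)\Rightarrow(4)\Rightarrow(5)\Rightarrow(1)$ as the paper; $(1)\Rightarrow(2)$ and $(4)\Rightarrow(5)$ are identical, and your $(3)\Rightarrow(4)$ (universal sentences are continuous along the closure of the union $\bigcup_n M_{2^n}(\m)$) is the same computation the paper packages as the embedding $\m\otimes\r\hookrightarrow\prod_\u M_{2^n}(\m)$. You genuinely diverge in two places. For $(5)\Rightarrow(1)$ the paper notes that $\m$ sits existentially closed inside $\m\otimes\r$, viewed as the union of the chain $\m\subseteq M_2(\m)\subseteq M_4(\m)\subseteq\cdots$ whose terms all share a universal theory by (5), and then pulls the $\forall\exists$-axiomatizable property of being McDuff down along that existential embedding; your $M_2$-trick instead embeds $M_2(\mathcal{E})$ into $\mathcal{E}^\u$ and pulls approximately central $2\times 2$ matrix units back into $\mathcal{E}$. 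Both work; yours invokes McDuff's central-sequence characterization explicitly where the paper hides it inside the citation that McDuffness is $\forall\exists$-axiomatizable. For $(2)\Rightarrow(3)$ the paper only uses that $\FA$ is an invariant of the universal theory together with the existence of an existentially closed factor universally equivalent to $\m$; your route additionally establishes McDuffness of $\mathcal{E}$ and derives (4) for $\m$ outright, which is more than needed but valid. Finally, the step you single out as the real obstacle --- producing an existentially closed $\mathcal{E}$ with $\Th_\forall(\mathcal{E})=\Th_\forall(\m)$ --- is a standard fact for inductive ($\forall\exists$-axiomatizable) classes: run the usual chain construction inside the class of II$_1$ factors satisfying $\Th_\forall(\m)$, which is itself inductive, i.e.\ exactly the fix you propose. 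The paper relies on this fact silently, so there is no gap on either side.
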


In the statement of the proposition, when we say that $\m$ is existentially closed, we mean that $\m$ is an existentially closed model of its theory.

\begin{proof}[Proof of Proposition \ref{equivalence}]
Since McDuff II$_1$ factors have full fundamental group, (1) implies (2) is trivial.  (2) implies (3) follows from the fact that $\m\equiv_\forall \n$ implies $\FA(\m)=\FA(\n)$.  (3) implies (4) follows from the fact that $\m\otimes \r$ embeds into $\prod_\u M_{2^n}(\m)$.  Now suppose that (4) holds and fix an arbitrary II$_1$ factor $\m$.  Since $\m\otimes \r$ is McDuff, for any $t\in \R_+$ we have that $$\m\equiv_\forall \m\otimes \r \cong (\m\otimes \r)_t\equiv_\forall \m_t,$$ whence (5) holds.  Finally assume that (5) holds and assume that $\m$ is existentially closed.  By considering the chain 
$$\m\subseteq M_2(\m)\subseteq M_4(\m)\subseteq M_8(\m)\subseteq \cdots$$ and noting that each element of the chain has the same universal theory as $\m$ by (5), we see that $\m$ is existentially closed in the union $\m\otimes \r$.  Since $\m\otimes \r$ is McDuff and being McDuff is $\forall\exists$-axiomatizable, we have that $\m$ is McDuff as well.
\end{proof}

Note that it is not always true that $\m\equiv_{\forall\exists}\m\otimes \r$ (e.g. when $\m$ is not McDuff).

\end{document}